\documentclass[paperpaper,10pt,conference]{ieeeconf}

\IEEEoverridecommandlockouts
\let\labelindent\relax

\usepackage{cite}

\usepackage{amsmath,amssymb,amsthm}
\usepackage{graphicx}
\usepackage{epsfig}
\usepackage{xcolor}
\usepackage{enumitem}
\usepackage{float}

\usepackage{hyperref}
\hypersetup{
colorlinks=true,
linkcolor=blue,
filecolor=blue,
urlcolor=blue,
citecolor=blue
}
\usepackage[linesnumbered,ruled]{algorithm2e}
\usepackage{caption}
\usepackage{subcaption}
\usepackage{mathptmx}
\usepackage{tikz}
\usetikzlibrary{arrows,shapes,calc}
\usetikzlibrary{calc,intersections,through,backgrounds}
\usetikzlibrary{positioning}

\usepackage{pgfplots}
\pgfplotsset{compat=1.16}
\usepackage{pgfplotstable}

\usepackage{textgreek}
\usepackage{textcomp}

\definecolor{myblue}{RGB}{0,114,178}
\definecolor{mygreen}{RGB}{0,158,115}

\theoremstyle{theorem}
\newtheorem{theorem}{Theorem}
\newtheorem{lemma}{Lemma}
\newtheorem{corollary}{Corollary}

\theoremstyle{definition}
\newtheorem{definition}{Definition}

\newtheorem{remark}{Remark}
\newtheorem{example}{Example}
\newtheorem{problem}{Problem}

\newcommand{\tb}[1]{\textcolor{black}{#1}}

\def\BibTeX{{\rm B\kern-.05em{\sc i\kern-.025em b}\kern-.08em
T\kern-.1667em\lower.7ex\hbox{E}\kern-.125emX}}

\begin{document}

\title{\LARGE \bf
Guaranteed Cost Structured Control in Infinite-Horizon Linear-Quadratic Cooperative Differential Games
}
\author{
Aniruddha Roy and Pavankumar Tallapragada, \IEEEmembership{Member, IEEE}
\thanks{A. Roy and P. Tallapragada are with the Robert Bosch Centre for Cyber-Physical Systems, Indian Institute of Science, Bengaluru 560012, Karnataka, India. A. Roy's research is supported by the Anusandhan National Research Foundation (ANRF) through the National Post-Doctoral Fellowship (N-PDF) under Grant PDF/2025/003882 (e-mail: \{aniruddharoy, pavant\}@iisc.ac.in).}
\thanks{This work has been accepted at the 2026 IEEE 65th Conference on Decision and Control (CDC 2026), to be held in Honolulu, Hawaii, USA from December 15–18, 2026. The final published version will be copyrighted by IEEE.}
}
\maketitle
\thispagestyle{empty}
\pagestyle{empty}

\begin{abstract}	
\tb{In this paper, we consider the infinite-horizon 
linear-quadratic cooperative differential games 
with output feedback information structure. We 
first show that computing Pareto optimal 
controls under output feedback is difficult 
even for low-dimensional games. To address this, 
we introduce the concept of feedback guaranteed 
cost structured control (GCSC). At a feedback GCSC, the total weighted team cost remains below a prescribed threshold while satisfying the structural constraint. We derive monotonicity properties of the feedback 
GCSC set and the admissible weight set, respectively. Further, we show that Pareto optimal controls (if they exist) belong to the class of feedback GCSCs. We provide performance measures of the Pareto optimal 
controls and the proposed GCSC relative to the 
output feedback optimal control. We also establish verification and synthesis conditions for a feedback GCSC using linear 
matrix inequalities, where the synthesis formulation is convex and requires no 
semi-definite programming relaxation. Finally, we illustrate the effectiveness of the proposed approach through numerical examples, including a microgrid 
tracking synchronization case study.} 
\end{abstract}
\begin{keywords} Game theory, Cooperative control, Differential games, Distributed control  
\end{keywords}
\section{Introduction}
\tb{Differential game theory provides a mathematical framework for analyzing strategic interactions among multiple players \cite{Basar:99}. A cooperative differential game (CDG) arises when players coordinate their control actions to optimize a collective objective \cite{ Engwerda:08CDG}. Pareto optimality has been widely studied in CDGs, with applications in areas such as power systems \cite{Chen:15, Mu:2019}, human-robot coordination \cite{An:2023cooperative}, and consensus problems \cite{Kazerooni:09}. Pareto optimality under an open-loop information structure in a finite-horizon setting was studied in \cite{Engwerda:2010SIAM} and later extended to the infinite-horizon case in \cite{Reddy:2013Automatica, Reddy:2014TAC}. In particular, \cite{Reddy:2013Automatica} established necessary and sufficient conditions for Pareto optimality in infinite-horizon linear-quadratic cooperative differential games (LQ-CDGs) and showed its connection to a weighted-sum optimal control problem. }

\tb{Despite these advances, many real-world systems consist of multiple interacting agents, commonly referred to as multi-agent systems (MAS). In such systems, agents are often heterogeneous and communicate over a directed network, where each agent typically has access only to local (or output) information. Work in \cite{Kazerooni:09} primarily addressed consensus in homogeneous MAS with decoupled dynamics under an undirected graph. Moreover, \cite{Reddy:2013Automatica, Reddy:2014TAC} studied the Pareto optimality problem under an open-loop information structure. However, the problem of obtaining Pareto optimal solutions in a CDG setting under output feedback information remains largely unexplored. The control of heterogeneous MAS using the guaranteed cost equilibrium concept was studied for networked differential games in a non-cooperative setting in \cite{Roy:22}, and later extended to a general class of linear-quadratic non-cooperative differential games (LQ-NCDGs) with output feedback information in \cite{Roy:2025DGAA}. We stress that the focus of the present work is fundamentally different from \cite{Roy:2025DGAA}. Specifically, \cite{Roy:2025DGAA} studies an equilibrium concept in the LQ-NCDG setting, whereas the current work addresses a cooperative control problem in a CDG framework under output feedback information.} 

\tb{In this work, we propose a feedback guaranteed cost structured control (GCSC) concept for infinite-horizon LQ-CDGs with output feedback information structure. Note that, we consider a general class of LQ-CDGs with non-transferable utility (NTU), where all players form the grand coalition. The formation of sub-coalitions and utility transfers (side payments) during the game are not considered; see \cite{Petrosyan:06, Basar:18} for related discussions. Inspired from suboptimal control \cite{Iwasaki:94, Leibfritz:01, Jiao:2019TAC}, the objective is to design a structured cooperative controller that ensures the weighted team cost remains below a prescribed bound while satisfying the structural constraint. To the best of our knowledge, the GCSC problem for LQ-CDGs with NTU under an output feedback information structure has not been studied in the existing literature. The main contributions of this paper are: (i) we introduce 
the notion of feedback GCSC for a general class of 
LQ-CDG (ii) we derive fundamental 
properties of the feedback GCSC set and the admissible 
weight set, including monotonicity 
(Lemma~\ref{lem:monotonicity_properties}) and the 
inclusion of Pareto optimal controls within the 
GCSC set (Theorem~\ref{thm:pareto_gc}), (iii) we 
quantify the associated performance measures (Lemma~\ref{lem:eta1_eta2_bound}), 
and (iv) we derive verification 
(Theorem~\ref{thm:GC_verification}) and synthesis (Theorem~\ref{thm:GC_cop_synthesis}) conditions for a feedback GCSC. We emphasize that the synthesis formulation 
gives a convex feasibility problem. As a 
result, semi-definite programming (SDP) 
relaxation is not required, in contrast to 
\cite{Roy:2025DGAA}; see 
Remark~\ref{rem:comparision_Roy2025} for a 
detailed comparison.
}

The remainder of this paper is organized as follows. Section~\ref{sec:Prelim} presents the 
preliminaries and problem formulation. 
Section~\ref{sec:OFGCC} introduces the feedback 
GCSC and its fundamental properties. 
Section~\ref{sec:verfication_existence} provides 
verification and synthesis conditions. 
Section~\ref{sec:numerical} presents numerical 
examples, and Section~\ref{sec:concl} concludes.

\subsubsection*{Notation} 
\tb{$\mathbb{R}^{n\times m}$ denotes real matrices 
of size $n\times m$; $\mathbb{R}^n := 
\mathbb{R}^{n\times 1}$. $E^\prime$ denotes 
transpose. ${I}_n$ and ${0}_{n\times m}$ denote 
the identity and zero matrices. For 
$x \in \mathbb{R}^n$, $\|x\| := \sqrt{x^\prime x}$. An $n$-tuple
$(V_1,\ldots,V_n)$ is written as $(V_i,V_{-i})$ 
where $V_{-i}$ collects all elements except $V_i$. 
$M\succ 0$ ($M\succeq 0$) denotes positive 
definite (semi-definite). $W\otimes X$ and 
$W\oplus X$ denote the Kronecker product and 
direct sum. $A\in\mathbb{R}^{n\times n}$ is 
Hurwitz if all eigenvalues have negative real 
parts. $\mathcal{B}(r):=\{z\in\mathbb{R}^n\mid 
\|z\|\le r\}$, $r>0$ (given). Let 
$\mathsf{G}=(\mathsf{N},\mathsf{E})$ be a 
directed graph with node set 
$\mathsf{N}=\{1,\dots,N\}$ and edge set 
$\mathsf{E}\subseteq\mathsf{N}\times\mathsf{N}$, where $(j,i)\in\mathsf{E}$ denotes a link from 
$j$ to $i$ and $\mathsf{N}_i:=\{j\in\mathsf{N} 
\mid(j,i)\in\mathsf{E}\}$ is the in-neighbor 
set of node $i$.}
\section{Preliminaries and problem formulation}
\label{sec:Prelim}
In this section, we first introduce the required preliminaries on Pareto optimality in infinite-horizon LQ-CDG. We then present the problem.  
\subsection{Preliminaries}
Let $\mathsf{N} := \{1,2,\ldots,N\}$ denote the set of players. We assume that the state vector $x(t)\in\mathbb{R}^n$ evolves according to the following linear time-invariant (LTI) dynamics 
\begin{subequations}
\label{eq:LQDG}
\begin{align}
\dot{x}(t) = Ax(t)+\sum_{i\in\mathsf{N}} B_i u_i(t), ~ x(0)=x_0,
\label{eq:state_dynamics}
\end{align}
where player $i\in \mathsf{N}$ applies a control input $u_i(t)\in\mathbb{R}^{m_i}$ at each time instant $t \in[0,\infty)$. Here,  $A\in\mathbb{R}^{n\times n}$, $B_i\in\mathbb{R}^{n\times m_i}$, and $x_0\in\mathbb{R}^n$ is the initial condition. We assume that each player $i$ has only partial state information. In particular, player $i \in \mathsf{N}$ has access to the partial state or output $y_i(t)\in\mathbb R^{s_i}$ given by
\begin{align}
y_i(t) = C_i x(t),
\label{eq:output_vector}
\end{align}
where $C_i\in\mathbb R^{s_i\times n}$, $\mathrm{rank}(C_i) = s_i$, and $s_i\le n$. 
The objective of player $i\in\mathsf N$ is to determine $u_i(\cdot)$ that minimizes the following cost functional 
 \begin{align}
 J_i(x_0, u_i,u_{-i}) := \int_0^{\infty} \left( y_i^\prime(t)Q_i y_i(t) + u_i^\prime(t)R_i u_i(t)\right)dt,
 \label{eq:objective_IFH_output}
 \end{align}
 where $Q_i\in\mathbb R^{s_i\times s_i}$, $Q_i\succeq 0$, and $R_i\in\mathbb R^{m_i\times m_i}$, $R_i\succ 0$. Since each player has access only to output information \eqref{eq:output_vector}, we assume an output feedback information structure. In particular, each player uses a static output feedback control of the following form \eqref{eq:fbcontrol} to achieve their objectives. 
\begin{align} 
		u_i(t)=F_i y_i(t),~F_i\in \mathbb R^{m_i\times s_i}. \label{eq:fbcontrol}
	\end{align} 
\end{subequations} 
Note that player $i$'s cost \eqref{eq:objective_IFH_output} not only depends on his control strategy $u_i$ but also on other players control strategies $u_{-i}$ indirectly through the state equation \eqref{eq:state_dynamics}. This implies that the players costs are interdependent. As a result, we are in the framework of a game played over the infinite horizon $[0,\infty)$. In what follows, we define the joint control $u(t) := \begin{bmatrix}
u_1^\prime(t) & u_2^\prime(t) & \cdots & u_N^\prime(t)
\end{bmatrix} ^\prime
\in \mathsf U_1 \times \mathsf U_2 \times \cdots \times \mathsf U_N
=: \mathsf U $. Using \eqref{eq:fbcontrol}, the joint feedback strategy is 
\begin{align}
     u(t) =
\begin{bmatrix}
(F_1 C_1)^\prime & (F_2C_2)^\prime &
\cdots &(F_N C_N)^\prime
\end{bmatrix}^\prime
x(t) =: F x(t). 
\label{eq:joint_feedbackcontrol}
\end{align}
Next, we consider a set of parameters denoted as $ \tb{\mathcal{A}} := \left \{ (\alpha_1, \cdots, \alpha_N)~\mid~ \alpha_i \in (0,1),~\sum_{i \in \mathsf N} \alpha_i = 1 \right \}.
$
Using the individual cost functionals \eqref{eq:objective_IFH_output}, the joint weighted cost functional of the differential game is defined as follows 
\begin{align}
    J_{\alpha} (x_0, u) := \sum_{i \in \mathsf{N}}\alpha_i J_i= \int_{0}^{\infty} \left(x(t)^\prime Q_{\alpha}x(t) + u(t)^\prime R_{\alpha} u(t) \right)dt , 
  \label{eq:joint_weighted_cost}
\end{align}
where $(\alpha_1, \cdots, \alpha_N):= \mathbf{\alpha} \in \tb{\mathcal{A}}$, $Q_{\mathbf{\alpha}}=\sum_{i \in \mathsf{N}} C_i^\prime (\alpha_i Q_i) C_i \in \mathbb{R}^{n \times n}$ and $R_{\alpha}= \oplus_{i\in \mathsf N} \alpha_i R_i \in \mathbb{R}^{m \times m}$, and $m:= m_1 + m_2 + \cdots + m_N$. Let $B := [
    B_1 ~B_2 ~\cdots ~B_N ] \in \mathbb{R}^{ m \times n}$. Then, the state dynamics in \eqref{eq:state_dynamics} can be rewritten using the control \eqref{eq:joint_feedbackcontrol}
\begin{align}
     \dot{x}(t)= Ax(t) + Bu(t), ~ x(0)=x_0, 
     \label{eq:state_dynamics_cooperation}
\end{align}
where the state $x(t)$ evolves according to $\dot{x}(t) = (A+BF)x(t),~x(0) = x_0$. 
We assume that pair $(A,B)$ is stabilizable. Next, we recall the Pareto optimality 
definition 
\cite{Engwerda:05}.
\begin{definition}[Pareto optimality]
\label{def:pareto}
The set of controls $u^{\star} \in \mathsf{U}$ 
is called Pareto optimal if the set of 
inequalities $J_i(x_0, u) \leq J_i(x_0, u^{\star})$, 
$i=1,2,\ldots, N$, with at least one of the 
inequalities being strict, does not allow for 
any solution $u \in \mathsf{U}$. The 
corresponding point $(J_1(x_0, u^{\star}), 
J_2(x_0, u^{\star}), \ldots, 
J_N(x_0,u^{\star})) \in \mathbb{R}^{N}$ is 
called a Pareto optimal value, and the set of 
all such Pareto optimal values is called the 
Pareto frontier.
\end{definition}
\tb{The following classical result provides a 
sufficient condition for Pareto optimality; see \cite{Leitmann:74}.}
\begin{lemma}
\label{lem:pareto_optimality_sufficient}
Let $\alpha \in \mathcal{A}$. If $u^{\star} \in 
\arg\min_{u \in \mathsf{U}} \left\{ \sum_{i=1}^{N} 
\alpha_i J_i(u) \right\}$, then $u^{\star}$ is 
Pareto optimal.
\end{lemma}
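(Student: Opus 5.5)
The plan is a direct contradiction argument that uses only Definition~\ref{def:pareto} and the strict positivity of the weights in $\mathcal{A}$. Fix $\alpha\in\mathcal{A}$ and let $u^{\star}$ be a minimizer of $J_\alpha(x_0,u)=\sum_{i=1}^N\alpha_i J_i(x_0,u)$ over $\mathsf{U}$. Implicitly, the minimum value $J_\alpha(x_0,u^\star)$ is finite, so every $J_i(x_0,u^\star)$ is finite as well. This holds because each $J_i\ge 0$, which follows from $Q_i\succeq 0$ and $R_i\succ 0$, and because each $\alpha_i>0$. I will note this explicitly, since the subtraction below requires finite values.

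Suppose, for contradiction, that $u^{\star}$ is not Pareto optimal. By Definition~\ref{def:pareto}, there then exists some $u\in\mathsf{U}$ with $J_i(x_0,u)\le J_i(x_0,u^{\star})$ for all $i\in\mathsf{N}$, and with strict inequality for at least one index, say $k$. These inequalities make each $J_i(x_0,u)$ finite too. Multiplying the $i$-th inequality by $\alpha_i$, which is strictly positive because $\alpha_i\in(0,1)$, preserves its direction. The $k$-th inequality stays strict, since $\alpha_k>0$.

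Summing over $i$ gives
\begin{align*}
\sum_{i=1}^N \alpha_i J_i(x_0,u) < \sum_{i=1}^N \alpha_i J_i(x_0,u^{\star}).
\end{align*}
To see that the sum is strict, write the difference as $\sum_i \alpha_i\bigl(J_i(x_0,u^\star)-J_i(x_0,u)\bigr)$. Every term is nonnegative and the $k$-th term is strictly positive. The displayed inequality says $J_\alpha(x_0,u)<J_\alpha(x_0,u^\star)$, which contradicts the minimality of $u^\star$. Hence $u^\star$ is Pareto optimal.

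There is no real obstacle; the argument is elementary. The only points that need care are the following. First, the strict positivity of every $\alpha_i$, guaranteed by the definition of $\mathcal{A}$, is exactly what carries the single strict inequality through to the weighted sum. If zero weights were allowed, the conclusion would only give weak Pareto optimality. Second, the finiteness of the costs must be checked so that the arithmetic is valid. Finally, the argument is insensitive to the choice of $\mathsf{U}$, so the result holds equally for the structured output-feedback class \eqref{eq:fbcontrol}, with Pareto optimality understood relative to that same class.
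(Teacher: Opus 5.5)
Your proof is correct, and it is the standard weighted-sum contradiction argument behind the classical result; the paper gives no proof of its own and simply cites \cite{Leitmann:74}. You use $\alpha_i>0$ to carry the single strict inequality through the sum, and you note that $J_\alpha(x_0,u^\star)$ must be finite for the arithmetic to be valid; these are exactly the two points the statement leaves tacit.
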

\tb{Lemma~\ref{lem:pareto_optimality_sufficient} 
states that any minimizer of the weighted sum 
$\sum_i \alpha_i J_i$ is Pareto optimal. The 
connection between Pareto optimality and 
weighted-sum optimization is well established 
in the CDG literature; see, e.g., 
\cite{Engwerda:08CDG, Reddy:2013Automatica}.}
The weighted-sum optimization problem in 
Lemma~\ref{lem:pareto_optimality_sufficient} for the LQ-CDG~\eqref{eq:joint_weighted_cost}--\eqref{eq:state_dynamics_cooperation} 
is formulated as follows 
% \begin{align}
%     \mathrm{PO}:~\arg \min_{u \in \mathsf{U}}~ 
%     J_{\alpha}(x_0, u)~\mathrm{s.t.}~
%     \dot{x}(t) = Ax(t) + Bu(t),~ x(0) = x_0.
%     \label{eq:PO}
% \end{align}
\begin{align}
    \mathrm{PO}: \quad \arg \min_{u \in \mathsf{U}} \quad & J_{\alpha}(x_0, u) \label{eq:PO} \\
    \text{subject to} \quad & \dot{x}(t) = Ax(t) + Bu(t), ~ x(0)=x_0. \notag 
\end{align}
Following \cite{Engwerda:05} (see also \cite{Kazerooni:09}), by 
varying $\alpha \in \mathcal{A}$, the Pareto 
optimal controls $u^{\star}(t) = 
-R_{\alpha}^{-1} B^\prime P_{\alpha} x(t) 
=: F^{\star}x(t)$ \footnote{Note 
that $F^\star$ depends on $\alpha \in \mathcal{A}$ through the 
solution $P_\alpha$ of $\mathrm{ARE}_\alpha$. 
For brevity, we write $F^\star$ instead of 
$F^\star(\alpha)$.} are obtained from the 
weighted-sum optimal control problem \eqref{eq:PO}, 
where $P_{\alpha} \succ 0$ is the stabilizing 
solution of the following algebraic Riccati 
equation 
\begin{align}
    \mathrm{ARE}_{\alpha}:~A^\prime P_{\alpha} + 
    P_{\alpha} A + Q_{\alpha} - P_{\alpha} B 
    R_{\alpha}^{-1} B^\prime P_{\alpha} = 0. 
    \label{eq:ARE_alpha}
\end{align}
Under full state feedback ($C_i = I_n, \forall i \in \mathsf{N}$), 
the Pareto optimal strategy 
$F^\star = \begin{bmatrix}
    {F_1^{\star}}^\prime &{F_2^{\star}}^\prime &\cdots &{F_N^{\star}}^\prime
\end{bmatrix}^\prime = -R_\alpha^{-1} B^\prime P_\alpha$ directly implements the solution of problem in \eqref{eq:PO}. However, under the output feedback 
assumption in \eqref{eq:fbcontrol}, each 
player $i \in \mathsf{N}$ can only implement 
$u_i(t) = F_i y_i(t) = F_i C_i x(t)$. Following 
\cite[Theorem~2.1 and Remarks~2.2--2.3]{Engwerda:08}, 
the output feedback strategies $(F_i, F_{-i})$ 
that achieve Pareto optimality must satisfy 
$F^{\star} = \begin{bmatrix} (F_1^{\star} C_1)^\prime &(F_2^{\star} C_2)^\prime & \cdots & 
(F_N^{\star} C_N)^\prime \end{bmatrix}^\prime = 
-R_{\alpha}^{-1} B^\prime P_{\alpha}$. From here, the 
output feedback Pareto optimal strategy $F_i^{\star}$ for 
player $i \in \mathsf{N}$ is computed by solving
\begin{align}
    F_i^{\star} C_i = -G_i R_{\alpha}^{-1} B^\prime 
    P_{\alpha}, \label{eq:FiCi}
\end{align}
where $G_i := \begin{bmatrix} 0_{m_i \times m_1} 
& \cdots & I_{m_{i}} & \cdots & 
0_{m_i \times m_{N}} \end{bmatrix}$. 
Post-multiplying both sides of \eqref{eq:FiCi} with $C_i^\prime$ and we obtain $F_i^{\star} C_i C_i^\prime = -G_i R_{\alpha}^{-1} B^\prime 
P_{\alpha} C_i^\prime$. As $\text{rank}(C_i)=s_i\leq n$, we have that $C_iC_i^\prime$ is invertible, and this implies $F_i^{\star} = -G_i R_{\alpha}^{-1} B^\prime P_{\alpha} C_i^\prime \left(C_i C_i^\prime\right)^{-1}$, which further implies $F_i^{\star}C_i = -G_i R_{\alpha}^{-1} B^\prime P_{\alpha} 
    C_i^\prime \left(C_i C_i^\prime\right)^{-1}C_i$. So, the Pareto optimal strategies under \eqref{eq:fbcontrol} are solvable if the 
stabilizing solution $P_{\alpha} \succ 0$ of 
$\mathrm{ARE}_{\alpha}$ satisfies
\begin{align}
   \mathrm{SC_1}:~ G_i R_{\alpha}^{-1} B^\prime 
   P_{\alpha} \bigl( I_{n} - C_i^\prime 
   (C_i C_i^\prime)^{-1} C_i \bigr) = 0,~
   \forall i \in \mathsf{N}. 
   \label{eq:struct_condn1}
\end{align}
Thus, the set of output feedback Pareto optimal controls 
exists if, for all $\alpha \in \mathcal{A}$, the 
stabilizing solution $P_{\alpha} \succ 0$ of 
\eqref{eq:ARE_alpha} satisfies 
\eqref{eq:struct_condn1}. Note that, for the full 
state feedback case $C_i = I_n$ for all 
$i \in \mathsf{N}$, \eqref{eq:struct_condn1} is 
trivially satisfied since 
$(I_n - C_i^\prime(C_i C_i^\prime)^{-1}C_i) = 
0_{n \times n}$.
\begin{remark}
\label{rem:SC1_SC2}
By substituting $F = -R_{\alpha}^{-1} B^\prime 
P_{\alpha}$ into \eqref{eq:struct_condn1}, 
$\mathrm{SC}_1$ can be equivalently rewritten as
\begin{align}
   \mathrm{SC_2}:~ G_i F \bigl(I_{n} - C_i^\prime 
   (C_i C_i^\prime)^{-1} C_i \bigr) = 0,~
   \forall i \in \mathsf{N}. 
   \label{eq:struct_condn2}
\end{align}
Note that $\mathrm{SC}_1$ is expressed using $P_\alpha$, 
while $\mathrm{SC}_2$ is expressed directly in 
terms of $F$. Both ensure that $G_i F$ lies in the 
row space of $C_i$, so that player $i$'s control depends only on $y_i$. If there exists $F \in \mathbb{R}^{m \times n}$ 
satisfying \eqref{eq:struct_condn2}, then $F$ is 
said to be structured and is denoted by 
$F_{\mathrm{s}}$, and the control law 
$u(t) = F_{\mathrm{s}} x(t)$ in 
\eqref{eq:joint_feedbackcontrol} is referred to 
as a structured feedback control law, which implements a feedback controller while obeying the output feedback information structure in \eqref{eq:fbcontrol}.
\end{remark}
\begin{remark}
\tb{The matrix $(I_n - C_i^\prime 
(C_i C_i^\prime)^{-1} C_i)$ in 
\eqref{eq:struct_condn1} and \eqref{eq:struct_condn2} is the 
orthogonal projection onto the null space of 
$C_i,~i \in \mathsf{N}$. Both \eqref{eq:struct_condn1} and \eqref{eq:struct_condn2} require that player $i$'s control action has zero component in the state directions not accessible through \eqref{eq:output_vector}, which ensures implementability using output feedback \eqref{eq:fbcontrol}.}
\end{remark}
\tb{In the following example, we illustrate that computing a Pareto optimal solution under output feedback is difficult. Specifically, obtaining a stabilizing solution 
$P_{\alpha} \succ 0$ from \eqref{eq:ARE_alpha} 
that satisfies the condition 
\eqref{eq:struct_condn1} is too restrictive. 
This may not be possible even in low-dimensional 
differential games.}  
\begin{example} 
\label{exp:non_existence}
We consider a two-player game with output feedback information. The game \eqref{eq:LQDG} parameters are $A = \left[\begin{smallmatrix}
    0 &1 \\-1 &-2
\end{smallmatrix} \right]$, $B_1 = \left[\begin{smallmatrix}
    1 &0
\end{smallmatrix} \right]^\prime$, $B_2 = \left[\begin{smallmatrix}
0 &1 \end{smallmatrix} \right]$, $C_1 = \left[\begin{smallmatrix}
    1 &0
\end{smallmatrix} \right]$, 
$C_2 =  \left[\begin{smallmatrix} 0 &1 
\end{smallmatrix} \right]$, $Q_1 = 1$, $Q_2 = 5$, $R_1 = 1$, and $R_2 = 2.5$. 
\tb{By varying $\alpha \in \mathcal{A}$, none of 
the stabilizing solutions $P_\alpha \succ 0$ of 
\eqref{eq:ARE_alpha} satisfies 
\eqref{eq:struct_condn1}. Consequently, it 
remains unclear whether output feedback Pareto 
optimal solutions exist for this game.}
\end{example}
\subsection{Problem statement}
Given the computational difficulty associated with output feedback Pareto optimal controls, we propose the notion of feedback GCSC for LQ-CDG with NTU. Inspired from suboptimal control, instead of minimizing 
the weighted team cost, the players seek a structured feedback strategy that guarantees the weighted team cost does not exceed a given threshold. 
%
% \begin{problem}[$\mathrm{GC}$] 
% \label{prob:GC}
% Consider the LQ-CDG defined by 
% \eqref{eq:joint_weighted_cost}--\eqref{eq:state_dynamics_cooperation} 
% with output feedback information structure 
% \eqref{eq:fbcontrol}. Given 
% $\alpha \in \mathcal{A}$ and $\delta > 0$, 
% find a structured feedback gain 
% $F_{\mathrm{s}} \in \mathbb{R}^{m \times n}$ 
% satisfying $\mathrm{SC}_2$ in 
% \eqref{eq:struct_condn2} such that 
% $J_{\alpha}(F_{\mathrm{s}}) < \delta$ for all 
% $x_0 \in \mathcal{B}(r)$ and 
% $A + BF_{\mathrm{s}}$ is Hurwitz.
% \end{problem}

\begin{problem}[$\mathrm{GC}$] 
\label{prob:GC}
Consider the LQ-CDG defined by \eqref{eq:joint_weighted_cost}--\eqref{eq:state_dynamics_cooperation} with the output feedback information structure \eqref{eq:fbcontrol}. Let $r>0$, $\alpha \in \mathcal{A}$, and $\delta>0$ be given. Find a structured feedback gain 
$F_{\mathrm{s}} \in \mathbb{R}^{m \times n}$ satisfying the structural condition 
$\mathrm{SC}_2$ in \eqref{eq:struct_condn2} such that
\begin{subequations}
\label{eq:GCproblem}
\begin{align}
    & J_{\alpha}(F_{\mathrm{s}}) < \delta,~ \forall x_0 \in \mathcal{B}(r), \label{eq:prob_cond1} \\
    & A + BF_{\mathrm{s}} \text{ is Hurwitz.} \label{eq:prob_cond2}
\end{align}
\end{subequations}
\end{problem}
% \tb{
% \begin{equation}
% \begin{aligned}
% \text{GC}:~~\text{find} \quad
% & F_{\mathrm{s}} \in \mathbb{R}^{m \times n} \\
% \text{subject to}~~
% & J_{\alpha}(F_{\mathrm{s}}) < \delta, ~~ \forall x_0 \in \mathcal{B}(r), \\
% & F_{\mathrm{s}} \text{ satisfies } \mathrm{SC}_2
% \text{ in \eqref{eq:struct_condn2}}, \\
% & A+BF_{\mathrm{s}} \text{ is Hurwitz.}
% \end{aligned}
% \end{equation}
% }
%
\section{Guaranteed Cost Structured Control} 
\label{sec:OFGCC}
In the section, we first define the guaranteed cost controller set as follows.
\begin{definition}
\label{def:guaranteed_cost_set}
Let $r > 0$ be given and let $x_0 \in \mathcal{B}(r)$ be an initial condition. Let $\delta > 0$ and $\mathbf{\alpha} \in \tb{\mathcal{A}}$. The guaranteed cost controller set is defined as 
\begin{align}
    \mathfrak g_{\mathbf{\alpha}}(\delta)
:= \left\{F \in \mathbb{R}^{m\times n} ~\big|~
J_{\mathbf{\alpha}}(F) < \delta,~\forall x_0 \in \mathcal{B}(r)
\right\}.
\label{eq:g_delta}
\end{align}
\end{definition}
Using the set in \eqref{eq:g_delta}, we 
introduce the GCSC set. 
\tb{\begin{definition}
\label{def:guaranteed_cost_structured_control}
Let $r > 0$ be given and let $x_0 \in \mathcal{B}(r)$ 
be an initial condition. Let $\delta > 0$ and 
$\alpha \in \mathcal{A}$. The GCSC set is defined as 
\begin{align}
  \hat{\mathfrak{g}}_{\alpha}(\delta) :=
  \left\{ F_{\mathrm{s}} \in \mathbb{R}^{m \times n}
  ~\big|~
  F_{\mathrm{s}} \in \mathfrak{g}_{\alpha}(\delta)
  \text{ and }
  F_{\mathrm{s}} \text{ satisfies } 
  \eqref{eq:struct_condn2}
  \right\}. 
\label{eq:set_GC}
\end{align}
A feedback strategy $F_{\mathrm{s}}$ is called a 
feedback GCSC if 
$F_{\mathrm{s}} \in \hat{\mathfrak{g}}_{\alpha}(\delta)$. 
$F_{\mathrm{s}}$ is said to be stabilizing if 
$A + BF_{\mathrm{s}}$ is Hurwitz.
\end{definition}
}
Next we define the admissible weight set. 
\begin{definition}
\label{def:admissible_alpha_set}
For a given $\delta>0$, the admissible weight set is
defined as folllows 
\begin{align}
\tb{\mathcal{A}}(\delta) := \left\{\mathbf{\alpha}\in \tb{\mathcal{A}} ~\big|~\hat{\mathfrak{g}}_{\mathbf{\alpha}}(\delta)\neq\emptyset
\right\}.
\label{eq:weighted_admissible_set}
\end{align}
\end{definition}
In the following result, we establish the monotonicity properties of the sets defined in Definitions \ref{def:guaranteed_cost_structured_control} and \ref{def:admissible_alpha_set}.
\begin{lemma}
\label{lem:monotonicity_properties}
Let $0 < \delta \le \bar{\delta}$. Then the following properties hold 
\begin{enumerate}
    \item For any $\mathbf{\alpha} \in \tb{\mathcal{A}}$, $\hat{\mathfrak g}_{\mathbf{\alpha}}(\delta)
    \subseteq
    \hat{\mathfrak g}_{\mathbf{\alpha}}(\bar{\delta}).$
    \item The admissible weight sets satisfy $ \tb{\mathcal{A}}(\delta) \subseteq \tb{\mathcal{A}}(\bar{\delta}).$
\end{enumerate}
\end{lemma}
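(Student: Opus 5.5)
The argument is a direct set-inclusion check from Definitions~\ref{def:guaranteed_cost_set}--\ref{def:admissible_alpha_set}. Part~2 will follow from part~1, so we prove the two parts in that order.

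For part~1, fix $\alpha\in\mathcal{A}$ and take any $F_{\mathrm{s}}\in\hat{\mathfrak g}_{\alpha}(\delta)$. By \eqref{eq:set_GC}, $F_{\mathrm{s}}$ satisfies $\mathrm{SC}_2$ in \eqref{eq:struct_condn2} and $F_{\mathrm{s}}\in\mathfrak g_{\alpha}(\delta)$. By \eqref{eq:g_delta}, the latter means $J_{\alpha}(F_{\mathrm{s}})<\delta$ for every $x_0\in\mathcal{B}(r)$. Since $\delta\le\bar\delta$, for every such $x_0$ we get
\begin{align*}
J_{\alpha}(F_{\mathrm{s}})<\delta\le\bar\delta,
\end{align*}
so $F_{\mathrm{s}}\in\mathfrak g_{\alpha}(\bar\delta)$. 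Neither $\mathrm{SC}_2$ nor the ball $\mathcal{B}(r)$ depends on $\delta$, so the structural condition still holds. Hence $F_{\mathrm{s}}\in\hat{\mathfrak g}_{\alpha}(\bar\delta)$, which proves $\hat{\mathfrak g}_{\alpha}(\delta)\subseteq\hat{\mathfrak g}_{\alpha}(\bar\delta)$. The stabilizing property is a property of $F_{\mathrm{s}}$ alone and is not part of the set definition, so it needs no argument.

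For part~2, take $\alpha\in\mathcal{A}(\delta)$. By \eqref{eq:weighted_admissible_set}, the set $\hat{\mathfrak g}_{\alpha}(\delta)$ is nonempty, so it contains some $F_{\mathrm{s}}$. By part~1, $F_{\mathrm{s}}\in\hat{\mathfrak g}_{\alpha}(\bar\delta)$, so $\hat{\mathfrak g}_{\alpha}(\bar\delta)\neq\emptyset$. Since $\alpha\in\mathcal{A}$ is unchanged, $\alpha\in\mathcal{A}(\bar\delta)$.

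There is no real obstacle. The only point to watch is interpreting $J_\alpha(F)$ as the cost $J_\alpha(x_0,u)$ with $u=Fx$, for each fixed $x_0$. If that cost is $+\infty$ (for example, when $A+BF$ is not Hurwitz), the strict inequality simply fails, and the chain of inequalities above is still valid.
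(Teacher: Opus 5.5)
Your proof is correct and follows the paper's argument: both parts are direct definition-chasing using $J_\alpha(F_{\mathrm{s}})<\delta\le\bar\delta$, and neither $\mathrm{SC}_2$ nor $\mathcal{B}(r)$ depends on $\delta$. The only cosmetic difference is that you derive part~2 from part~1, which keeps the structural condition explicit. The paper instead repeats the inequality argument for part~2 and leaves membership in the structured set implicit.
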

\vspace{-2mm}
\begin{proof}
(1) The result follows from Definition~ \ref{def:guaranteed_cost_structured_control}. If $F_{\mathrm{s}} \in \hat{\mathfrak g}_{\mathbf{\alpha}}(\delta)$, then 
$J_{\mathbf{\alpha}}(F_{\mathrm{s}}) < \delta$. 
Since $\delta \le \bar{\delta}$, it follows that 
$J_{\mathbf{\alpha}}(F_{\mathrm{s}}) < \bar{\delta}$, and hence $F_{\mathrm{s}} \in \hat{\mathfrak g}_{\mathbf{\alpha}}(\bar{\delta})$.

(2) Let $\mathbf{\alpha} \in \tb{\mathcal{A}}(\delta)$. 
From Definition \ref{def:admissible_alpha_set}, there exists $F_\mathrm{s}$ such that
$J_{\mathbf{\alpha}}(F_\mathrm{s}) < \delta$. 
Since $\delta \le \bar{\delta}$, we have 
$J_{\mathbf{\alpha}}(F_\mathrm{s}) < \bar{\delta}$, and therefore 
$\mathbf{\alpha} \in \tb{\mathcal{A}}(\bar{\delta})$.
\end{proof}
\begin{remark}
The monotonicity of the set $\tb{\mathcal{A}}(\delta)$ in Lemma \ref{lem:monotonicity_properties} has a clear geometric interpretation. The subset $\tb{\mathcal{A}}(\delta) \subset \tb{\mathcal{A}}$ consists of weight vectors $\alpha$ for which there exists a structured controller $F_{\mathrm{s}}$ satisfying $J_{\mathbf{\alpha}}(F_{\mathrm{s}}) < \delta$. For small $\delta$, the upper bound is restrictive and $\tb{\mathcal{A}}(\delta)$ may be empty. As $\delta$ increases, the bound becomes less restrictive and $\tb{\mathcal{A}}(\delta)$ expands monotonically within $\tb{\mathcal{A}}$. 
\end{remark}
\begin{remark}
  For a given $\delta > 0$, the set $\tb{\mathcal{A}}(\delta) \subset \tb{\mathcal{A}}$ may contain multiple weight vectors. 
  % That is, there may exist several $\alpha \in \tb{\mathcal{A}}(\delta)$ for which $\hat{\mathfrak{g}}_{\mathbf{\alpha}}(\delta)$ is non-empty. 
  The selection of a particular $\alpha \in \tb{\mathcal{A}}(\delta)$ can be determined using bargaining solutions (see \cite[Ch. 6]{Engwerda:05}). In particular, for the two-player case, the Nash bargaining solution can be used to select a weight vector within our framework. The bargaining problem for more than two players is beyond the scope of this work. The present work focuses on synthesizing a structured controller $F_\mathrm{s}$ for a given admissible weight vector $\alpha \in \tb{\mathcal{A}}(\delta)$ that achieves $J_{\alpha}(F_{\mathrm{s}}) < \delta$. 
\label{rem:selection_weight}
\end{remark}
\tb{The following result shows that the set of Pareto optimal controls is contained in the set of feedback GCSCs.}
\begin{theorem}
\label{thm:pareto_gc}
\tb{Consider problem $\mathrm{PO}$ in \eqref{eq:PO}. Let $\alpha \in \mathcal{A}$ and suppose that the 
stabilizing solution $P_\alpha \succ 0$ of 
$\mathrm{ARE}_\alpha$ in \eqref{eq:ARE_alpha} 
satisfies $\mathrm{SC}_1$ in 
\eqref{eq:struct_condn1}. Let 
$F^\star = -R_\alpha^{-1} B^\prime P_\alpha$ 
denote the corresponding output feedback 
Pareto optimal gain. Then, for any 
$\rho > 0$, setting 
$\delta := J_\alpha(F^\star) + \rho$, 
we have $F^\star \in \hat{g}_\alpha(\delta)$. 
Equivalently, $F^\star \in \hat{g}_\alpha(\delta)$ 
for every $\delta > J_\alpha(F^\star)$.} 
\end{theorem}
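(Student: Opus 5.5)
The plan is to check the two defining requirements of $\hat{\mathfrak g}_\alpha(\delta)$ in Definition~\ref{def:guaranteed_cost_structured_control} directly for $F^\star=-R_\alpha^{-1}B^\prime P_\alpha$: the structural condition $\mathrm{SC}_2$ and the guaranteed-cost bound of Definition~\ref{def:guaranteed_cost_set}. Stability of $A+BF^\star$ comes for free along the way.

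\textbf{Structure and stability.} The structural part follows from Remark~\ref{rem:SC1_SC2}. Substituting $F^\star=-R_\alpha^{-1}B^\prime P_\alpha$ into \eqref{eq:struct_condn2} gives exactly $-1$ times the left-hand side of \eqref{eq:struct_condn1}, which vanishes by hypothesis. Hence $F^\star$ satisfies $\mathrm{SC}_2$, i.e.\ $F^\star=F_{\mathrm s}$ is structured. Moreover, $P_\alpha$ is the stabilizing solution of $\mathrm{ARE}_\alpha$, so $A+BF^\star$ is Hurwitz by definition. Thus $F^\star$ is also a stabilizing GCSC in the sense of Problem~\ref{prob:GC}.

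\textbf{Cost identity.} I will rewrite \eqref{eq:ARE_alpha} as the closed-loop Lyapunov equation
\begin{align*}
(A+BF^\star)^\prime P_\alpha + P_\alpha(A+BF^\star) + Q_\alpha + {F^\star}^\prime R_\alpha F^\star = 0 .
\end{align*}
This is a routine expansion using $P_\alpha B R_\alpha^{-1}B^\prime P_\alpha = {F^\star}^\prime R_\alpha F^\star$. Along the closed-loop trajectory $x(t)=e^{(A+BF^\star)t}x_0$, differentiate $x^\prime P_\alpha x$ and integrate over $[0,T]$. Since $A+BF^\star$ is Hurwitz, $x(T)\to 0$ as $T\to\infty$, which yields $J_\alpha(x_0,F^\star)=x_0^\prime P_\alpha x_0$.

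\textbf{Main obstacle: the meaning of $J_\alpha(F^\star)$.} The cost depends on $x_0$, while the statement writes $J_\alpha(F^\star)$ as a single number. For $\delta:=J_\alpha(F^\star)+\rho$ to give a bound uniform in $x_0\in\mathcal B(r)$, I will interpret $J_\alpha(F^\star)$ as the worst-case value over the ball, consistent with Definition~\ref{def:guaranteed_cost_set}:
\begin{align*}
J_\alpha(F^\star) := \sup_{x_0\in\mathcal B(r)} x_0^\prime P_\alpha x_0 = r^2\lambda_{\max}(P_\alpha),
\end{align*}
which is finite. Under this interpretation, for every $x_0\in\mathcal B(r)$ we have $J_\alpha(x_0,F^\star)\le r^2\lambda_{\max}(P_\alpha) < r^2\lambda_{\max}(P_\alpha)+\rho=\delta$. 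Hence $F^\star\in\mathfrak g_\alpha(\delta)$, and combined with the structural part, $F^\star\in\hat{\mathfrak g}_\alpha(\delta)$.

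\textbf{Equivalent formulation.} Any $\delta>J_\alpha(F^\star)$ can be written as $J_\alpha(F^\star)+\rho$ with $\rho=\delta-J_\alpha(F^\star)>0$, which gives the second form of the statement. The strictness coming from $\rho>0$ is what makes the inequality in \eqref{eq:g_delta} strict, since the supremum is attained on the boundary of $\mathcal B(r)$.
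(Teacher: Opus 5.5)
Your proof is correct, but it argues differently from the paper, whose entire proof is ``follows from Lemma~\ref{lem:monotonicity_properties}.'' You verify the two defining conditions of $\hat{\mathfrak g}_\alpha(\delta)$ directly. For the structural condition, you substitute $F^\star$ into $\mathrm{SC}_2$ and invoke $\mathrm{SC}_1$ (Remark~\ref{rem:SC1_SC2}). For the cost condition, you rewrite $\mathrm{ARE}_\alpha$ as a closed-loop Lyapunov equation to get $J_\alpha(x_0,F^\star)=x_0^\prime P_\alpha x_0$, and then read $J_\alpha(F^\star)$ as the worst case $r^2\lambda_{\max}(P_\alpha)$ over $\mathcal B(r)$. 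Your argument never needs monotonicity, because membership at $\delta=J_\alpha(F^\star)+\rho$ follows immediately from the strict inequality. Lemma~\ref{lem:monotonicity_properties} can at most transfer membership from a smaller threshold to a larger one, so it presupposes exactly the base check you carry out. In particular, the paper's citation never says where $\mathrm{SC}_1$ enters, and your structural step makes this explicit. Your route also resolves an ambiguity: $J_\alpha(F^\star)$ depends on $x_0$, while Definition~\ref{def:guaranteed_cost_set} demands a bound uniform over $\mathcal B(r)$. The worst-case reading is consistent with Remark~\ref{rem:independent_initcond} ($P_\alpha\prec\frac{\delta}{r^2}I_n$), and your Lyapunov identity guarantees this worst case is finite, so $\delta$ is a legitimate threshold. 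The paper's version is shorter, and under a fixed-$x_0$ reading it amounts to the trivial inequality $J_\alpha(x_0,F^\star)<J_\alpha(x_0,F^\star)+\rho$. Your argument reduces to that by dropping the supremum, so it covers both readings.
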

\vspace{-2mm}
\begin{proof}
Follows from Lemma~\ref{lem:monotonicity_properties}.
\end{proof} 
In what follows, we quantify the performance of the proposed feedback GCSC strategy relative to both Pareto optimal controls (if they exist) and output feedback optimal control. Now, we define the performance measure ratios 
\begin{align}
    \eta_1 \tb{(F^{\star})} := \frac{J_{\mathrm{PO}}\tb{(F^{\star})}}{J_{\mathrm{OPT}}}, 
    \qquad 
    \eta_2 \tb{(F_{\mathrm{s}})} := \frac{J_{\mathrm{GC}} \tb{(F_{\mathrm{s}})}}{J_{\mathrm{OPT}}},
\label{eq:eta1_eta2}
\end{align}
\tb{where $J_{\mathrm{PO}}(F^{\star}) := \sum_i J_i(F^{\star})$ denotes the total cost corresponding to a Pareto optimal solution (if it exists) of problem $\mathrm{PO}$, $J_{\mathrm{GC}}(F_{\mathrm{s}}) := \sum_i J_i(F_{\mathrm{s}})$ denotes the total cost achieved by the feedback GCSC strategy obtained from problem $\mathrm{GC}$,} and $J_{\mathrm{OPT}}$ denotes the optimal cooperative cost. The cooperative optimal cost $J_{\mathrm{OPT}}$ is obtained from the infinite-horizon output feedback linear quadratic problem \cite{Engwerda:08}: 
% \begin{align}
%     \text{OPT}:~J_{OPT}= \min_{(F_i, F_{-i})}
% 	\sum_{i=1}^N J_i(F_i,F_{-i}), ~\text{s.t.}~\dot{x}(t)=(A+\sum_{i\in \mathsf N} B_i F_i C_i)x(t),~x(0)=x_0.
% \end{align}
\begin{align}
    \text{OPT}: ~J_{\text{OPT}} = \min_{( F_i, F_{-i})} ~ & \sum_{i=1}^N J_i(x_0, F_i,F_{-i}) \label{eq:OPT} \\
    \text{subject to} ~& \dot{x}(t) = \Big(A + \sum_{i \in \mathsf{N}} B_i F_i C_i \Big) x(t),~x(0)=x_0 \notag. 
\end{align}
Let $\tilde{Q}:=\sum_{i\in \mathsf N} C_i^\prime Q_i C_i$, $\tilde{R}:=\oplus_{i\in \mathsf N}R_i$, $\tilde{B}:=[B_1~B_2~\cdots~B_N]$. Since $\tilde{Q}\succeq 0$ and $\tilde{R}\succ 0$, \tb{the optimal cooperative cost of the problem 
$\mathrm{OPT}$ is given by 
$J_{\mathrm{OPT}} = x_0^\prime P x_0$, where 
$P \succ 0$ is the unique stabilizing solution of $A^\prime P + PA + \tilde{Q} - 
P\tilde{B}\tilde{R}^{-1}\tilde{B}^\prime P = 0$; 
see~\cite[Ch.~5]{Engwerda:05}.} \tb{It is well known that $J_{\mathrm{OPT}}= x_0^\prime P x_0$ provides a lower bound on any achievable cooperative cost \cite{Engwerda:08}. }
%However, in distributed control of MAS, output feedback strategies that attain this bound may not exist; see \cite[Remark 4.9]{Roy:2025DGAA}. 
\tb{In the following result, we establish the lower bounds on performance measures $\eta_1 (F^{\star})$ and $\eta_2 (F_{\mathrm{s}})$.}
\tb{\begin{lemma}
\label{lem:eta1_eta2_bound}
\begin{enumerate}
    \item Let $\alpha \in \mathcal{A}$ and consider problem $\mathrm{PO}$.  Suppose that the stabilizing solution $P_\alpha \succ 0$ of $\mathrm{ARE}_\alpha$ in \eqref{eq:ARE_alpha} satisfies $\mathrm{SC}_1$ in \eqref{eq:struct_condn1}. Let $ F^\star = -R_\alpha^{-1} B^\prime P_\alpha$ denote the corresponding output feedback Pareto optimal gain. Then $\eta_1(F^{\star})\geq 1$.
      \item Let $\alpha \in \mathcal{A}$ and consider Problem \ref{prob:GC}. Let $\delta > 0$ be given. Suppose that a structural feedback gain $F_s$ exists and satisfies $\mathrm{SC}_2$ in \eqref{eq:struct_condn2}. Then $\eta_2 (F_{\mathrm{s}}) \geq 1$.
\end{enumerate}
\end{lemma}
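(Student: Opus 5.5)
The idea is to reduce both claims to one fact: $J_{\mathrm{OPT}} = x_0^\prime P x_0$ is the minimum of $\sum_i J_i$ over all admissible controls. Any stabilizing feedback gain $F$, in particular any structured one, then has total cost at least $x_0^\prime P x_0$.

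First, rewrite the unweighted total cost as a standard LQ cost. For any $F \in \mathbb{R}^{m\times n}$ with $A+BF$ Hurwitz and $u = Fx$, summing \eqref{eq:objective_IFH_output} over $i$ gives
\[ \sum_{i} J_i(x_0,F) = \int_0^\infty x^\prime(\tilde Q + F^\prime \tilde R F)x\,dt = x_0^\prime X_F x_0, \]
where $X_F \succeq 0$ solves the Lyapunov equation $(A+\tilde B F)^\prime X_F + X_F(A+\tilde B F) + \tilde Q + F^\prime\tilde R F = 0$. Here $\tilde B = B$.

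Second, compare $X_F$ with the stabilizing Riccati solution $P$. Subtracting the Riccati equation $A^\prime P + PA + \tilde Q - P\tilde B\tilde R^{-1}\tilde B^\prime P = 0$ from the Lyapunov equation and completing the square gives
\[ (A+\tilde BF)^\prime (X_F - P) + (X_F-P)(A+\tilde BF) + (F + \tilde R^{-1}\tilde B^\prime P)^\prime \tilde R (F+\tilde R^{-1}\tilde B^\prime P) = 0. \]
Since $A+\tilde BF$ is Hurwitz and the last term is $\succeq 0$, the integral representation of Lyapunov solutions yields $X_F - P \succeq 0$. Hence $\sum_i J_i(x_0,F) \ge x_0^\prime P x_0 = J_{\mathrm{OPT}}$. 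This is exactly the lower-bound property the paper cites from \cite{Engwerda:08}, so one may alternatively invoke it directly.

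Third, specialize to the two items.
\begin{enumerate}
\item For $F^\star = -R_\alpha^{-1}B^\prime P_\alpha$, the matrix $A+BF^\star$ is Hurwitz because $P_\alpha$ is the stabilizing solution of $\mathrm{ARE}_\alpha$. Taking $F = F^\star$ gives $J_{\mathrm{PO}}(F^\star) \ge J_{\mathrm{OPT}}$. The condition $\mathrm{SC}_1$ is only needed so that $F^\star$ is implementable and hence a feasible point of $\mathrm{OPT}$; the inequality itself does not depend on it.
\item For the GCSC gain $F_{\mathrm{s}}$, condition \eqref{eq:prob_cond2} gives that $A+BF_{\mathrm{s}}$ is Hurwitz. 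The same bound then gives $J_{\mathrm{GC}}(F_{\mathrm{s}}) \ge J_{\mathrm{OPT}}$.
\end{enumerate}
Dividing by $J_{\mathrm{OPT}}$ yields $\eta_1(F^\star) \ge 1$ and $\eta_2(F_{\mathrm{s}}) \ge 1$.

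The main obstacle is making sure the ratio is well defined and that the comparison is legitimate.
\begin{itemize}
\item Division requires $J_{\mathrm{OPT}} > 0$, which holds because $P \succ 0$ and $x_0 \neq 0$. I would state $x_0 \neq 0$ as a standing assumption, since for $x_0 = 0$ both costs vanish.
\item In item 2, stabilizability of $F_{\mathrm{s}}$ must be used explicitly, because a non-stabilizing gain gives an infinite or undefined cost. I would read it from \eqref{eq:prob_cond2}, i.e.\ take $F_{\mathrm{s}}$ to be a solution of Problem~\ref{prob:GC}.
\end{itemize}
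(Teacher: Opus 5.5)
Your proof is correct and uses the same idea as the paper: both items reduce to the fact that $J_{\mathrm{OPT}} = x_0^\prime P x_0$ is a lower bound on the total cost $\sum_i J_i$ of any admissible gain, and you derive this bound yourself through the Lyapunov--Riccati completion of squares ($X_F - P \succeq 0$) instead of citing \cite{Engwerda:08} as the paper does. Your version is also more careful, because it states two hypotheses that the paper's one-line argument leaves implicit: $x_0 \neq 0$, so that the ratios are defined, and $A+BF_{\mathrm{s}}$ Hurwitz, which is genuinely needed, since without detectability of $(\tilde{Q}^{1/2},A)$ a non-stabilizing gain can have finite cost below $x_0^\prime P x_0$.
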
 }
\vspace{-2mm}
\begin{proof}
% (1) Since $J_{\mathrm{OPT}}$ denotes the minimum achievable cost obtained through optimal control problem and a Pareto optimal solution exists by assumption, we have $J_{\mathrm{OPT}} \leq J_{\mathrm{PO}}$, which implies $\eta_1 \geq 1$.

% (2) Similarly, since $J_{\mathrm{OPT}}$ denotes the minimum achievable cost and a GCSC solution exists by assumption, we obtain $J_{\mathrm{OPT}} \leq J_{\mathrm{GC}}$, which implies $\eta_2 \geq 1$.
% Since $J_{\mathrm{OPT}} = x_0^\prime P x_0$ denotes the minimum achievable cooperative optimal cost of problem $\mathrm{OPT}$, it follows from \cite[Theorem~2.1 and Remarks~2.2--2.3]{Engwerda:08} (see also \cite[Remark~4.9]{Roy:2025DGAA}) that output feedback strategies attaining this bound may not exist. As a result, we have $x_0^\prime P x_0 \leq J_{\mathrm{PO}} (F^{\star})$ and $x_0^\prime P x_0 \leq J_{\mathrm{GC}} (F_{\mathrm{s}})$, whenever the corresponding Pareto optimal and GCSC solutions exist. Hence, $\eta_1 (F^{\star}) \geq 1$ and $\eta_2 (F_{\mathrm{s}})\geq 1.$
\tb{Since $J_{\mathrm{OPT}} = x_0^\prime P x_0$ 
denotes the minimum achievable cooperative 
cost of problem \eqref{eq:OPT}, it follows 
from \cite[Theorem~2.1 and 
Remarks~2.2--2.3]{Engwerda:08} (see also 
\cite[Remark~4.9]{Roy:2025DGAA}) that output 
feedback strategies attaining this bound may 
not exist. As a result, 
$x_0^\prime P x_0 \leq 
J_{\mathrm{PO}}(F^{\star})$ and 
$x_0^\prime P x_0 \leq 
J_{\mathrm{GC}}(F_{\mathrm{s}})$, whenever 
the corresponding Pareto optimal and GCSC 
solutions exist. Hence, $\eta_1(F^{\star}) 
\geq 1$ and $\eta_2(F_{\mathrm{s}}) \geq 1$.}
\end{proof}
\vspace{-2mm}
The following result provides an upper bound on $\eta_2$.
\begin{corollary}
\label{cor:eta2_upperbound}
\tb{Consider Problem~\ref{prob:GC}.} Let $\delta > 0$ and consider uniform weights 
$\alpha_i = \frac{1}{N}$ for all $i \in \mathsf{N}$. 
If the feedback GCSC satisfies 
$\sum_{i=1}^{N} \alpha_i J_i \tb{(F_{\mathrm{s}})} < 
\delta$, then the total team cost satisfies 
$\sum_{i=1}^{N} J_i \tb{(F_{\mathrm{s}})} < N\delta$. 
Consequently, $1 \leq \eta_2 \tb{(F_{\mathrm{s}})} < 
\frac{N\delta}{J_{\mathrm{OPT}}}$.
\end{corollary}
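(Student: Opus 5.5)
The plan is to unwind the definition of the weighted cost with uniform weights and then combine the result with Lemma~\ref{lem:eta1_eta2_bound}. With $\alpha_i = \frac{1}{N}$ for every $i \in \mathsf{N}$ (note that $\alpha \in \mathcal{A}$, since each $\alpha_i \in (0,1)$ when $N \ge 2$ and the weights sum to one), \eqref{eq:joint_weighted_cost} becomes
\begin{align*}
J_\alpha(F_{\mathrm{s}}) = \frac{1}{N}\sum_{i=1}^N J_i(F_{\mathrm{s}}).
\end{align*}
By hypothesis, $F_{\mathrm{s}}$ is a feedback GCSC, so $J_\alpha(F_{\mathrm{s}}) < \delta$ for every $x_0 \in \mathcal{B}(r)$. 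Multiplying this inequality by $N>0$ gives
\begin{align*}
J_{\mathrm{GC}}(F_{\mathrm{s}}) = \sum_{i=1}^N J_i(F_{\mathrm{s}}) < N\delta,
\end{align*}
which is the first claim.

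For the bounds on $\eta_2$, I would fix $x_0 \in \mathcal{B}(r)$ with $x_0 \neq 0$. Then $J_{\mathrm{OPT}} = x_0^\prime P x_0 > 0$ because $P \succ 0$, so dividing the previous strict inequality by $J_{\mathrm{OPT}}$ gives $\eta_2(F_{\mathrm{s}}) < N\delta/J_{\mathrm{OPT}}$. The lower bound $\eta_2(F_{\mathrm{s}}) \ge 1$ is item~(2) of Lemma~\ref{lem:eta1_eta2_bound}, which applies because $F_{\mathrm{s}}$ satisfies $\mathrm{SC}_2$ in \eqref{eq:struct_condn2}. Chaining the two bounds gives $1 \le \eta_2(F_{\mathrm{s}}) < \frac{N\delta}{J_{\mathrm{OPT}}}$.

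The argument is elementary. The one point needing care is that the ratio $\eta_2$ is well defined only when $J_{\mathrm{OPT}} \neq 0$, so I would state explicitly that $x_0 \neq 0$; for $x_0 = 0$ all the costs vanish and the ratio is not meaningful. A side remark worth adding is that the two-sided bound implicitly forces $N\delta > J_{\mathrm{OPT}}$ whenever a GCSC exists, which is consistent with Lemma~\ref{lem:eta1_eta2_bound}.
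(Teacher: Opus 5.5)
Your proposal is correct and follows the argument the paper intends; the paper states the corollary without a separate proof. That argument multiplies the uniform-weight bound $\frac{1}{N}\sum_{i=1}^{N} J_i(F_{\mathrm{s}})<\delta$ by $N$, divides by $J_{\mathrm{OPT}}$, and takes the lower bound $\eta_2(F_{\mathrm{s}})\ge 1$ from item~(2) of Lemma~\ref{lem:eta1_eta2_bound}. Your caveats that $x_0\neq 0$ (so that $J_{\mathrm{OPT}}=x_0^\prime P x_0>0$) and $N\ge 2$ (so that the uniform weights lie in $\mathcal{A}$) are sensible additions that the paper leaves implicit.
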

\begin{remark}
\tb{The performance measures $\eta_1$ and $\eta_2$ are inspired by efficiency-gap measures such as the Price of Stability (PoS) in \cite[Theorem~4.6]{Roy:2025DGAA}; see also \cite{Anshelevich:2008pos}. However, unlike the PoS, which quantifies inefficiency arising from equilibrium outcomes in non-cooperative games, the measures $\eta_1$ and $\eta_2$ have fundamentally different interpretations in the present cooperative framework. Specifically, $\eta_1$ and $\eta_2$ quantify the performance degradation caused by structural constraints, whenever Pareto optimal controls and feedback GCSC exist, respectively, relative to the optimal cooperative cost $J_{\mathrm{OPT}}$.
} 
\end{remark}
\section{Verification and synthesis of feedback guaranteed cost structured control}
\label{sec:verfication_existence}
In this section, we provide verification and synthesis results for a feedback GCSC. For a given $\delta > 0$, the following result establishes sufficient conditions under which a feedback strategy is a GCSC.
\begin{theorem}
\label{thm:GC_verification}
% Consider the system \eqref{eq:state_dynamics_cooperation} with cost functional \eqref{eq:joint_weighted_cost}. 
Consider Problem \ref{prob:GC}. Let $r > 0$ be given and let $x_0 \in \mathcal{B}(r)$ be an initial condition.  Let $\delta > 0$ and $\mathbf{\alpha} \in \tb{\mathcal{A}}$ be given.
Suppose that a structured strategy $F_{\mathrm{s}}$ is given that satisfies \eqref{eq:struct_condn2}. 
If there exists $P_{\mathbf{\alpha}} \succ 0$ such that
\begin{subequations}
\label{eq:gc_test_LMI}
\begin{align}
& A_{\mathrm{cl}}(F_{\mathrm{s}})^\prime P_{\mathbf{\alpha}}
+ P_{\mathbf{\alpha}} A_{\mathrm{cl}}(F_\mathrm{s})
+ Q_{\mathbf{\alpha}}
+ {F_\mathrm{s}}^\prime R_{\mathbf{\alpha}}
F_{\mathrm{s}} \prec 0, \label{eq:gc_test_LMI1} \\
& {x_0}^\prime P_{\mathbf{\alpha}} x_0 < \delta, \label{eq:gc_test_LMI2}
\end{align}
\end{subequations}
where $A_{\mathrm{cl}}(F_{\mathrm{s}})
:= A + B F_{\mathrm{s}}$. Then, $F_{\mathrm{s}}$ is a stabilizing GCSC.
\end{theorem}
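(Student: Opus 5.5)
The plan is a standard Lyapunov argument in three steps: stability from the strict LMI, a quadratic cost bound along trajectories, and then membership in the GCSC set.

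\emph{Stability.} Since $Q_\alpha \succeq 0$ and $F_{\mathrm s}^\prime R_\alpha F_{\mathrm s}\succeq 0$, inequality \eqref{eq:gc_test_LMI1} gives
\[
A_{\mathrm{cl}}(F_{\mathrm s})^\prime P_\alpha + P_\alpha A_{\mathrm{cl}}(F_{\mathrm s}) \prec 0 .
\]
Together with $P_\alpha\succ 0$, the standard Lyapunov theorem shows that $A_{\mathrm{cl}}(F_{\mathrm s})=A+BF_{\mathrm s}$ is Hurwitz. Hence $F_{\mathrm s}$ is stabilizing, and $x(t)=e^{A_{\mathrm{cl}}t}x_0\to 0$ exponentially.

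\emph{Cost bound.} Take $V(x)=x^\prime P_\alpha x$ along the closed loop $\dot x = A_{\mathrm{cl}}x$ with $u=F_{\mathrm s}x$. Differentiating and using \eqref{eq:gc_test_LMI1},
\[
\dot V(x(t)) = x^\prime\bigl(A_{\mathrm{cl}}^\prime P_\alpha + P_\alpha A_{\mathrm{cl}}\bigr)x < -x^\prime\bigl(Q_\alpha + F_{\mathrm s}^\prime R_\alpha F_{\mathrm s}\bigr)x
\]
whenever $x\neq 0$. Integrate from $0$ to $T$ and let $T\to\infty$; since $V(x(T))\to 0$ by stability,
\[
J_\alpha(x_0,F_{\mathrm s}) = \int_0^\infty x^\prime\bigl(Q_\alpha+F_{\mathrm s}^\prime R_\alpha F_{\mathrm s}\bigr)x\,dt \le x_0^\prime P_\alpha x_0 ,
\]
with strict inequality when $x_0\ne 0$. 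Equivalently, one can note that $J_\alpha = x_0^\prime X x_0$, where $X$ solves the Lyapunov equation $A_{\mathrm{cl}}^\prime X + X A_{\mathrm{cl}} + Q_\alpha + F_{\mathrm s}^\prime R_\alpha F_{\mathrm s}=0$. Then $P_\alpha - X$ satisfies a strict Lyapunov inequality, which gives $P_\alpha - X \succ 0$.

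\emph{Conclusion.} By \eqref{eq:gc_test_LMI2}, $J_\alpha(F_{\mathrm s}) \le x_0^\prime P_\alpha x_0 < \delta$. Because $F_{\mathrm s}$ satisfies $\mathrm{SC}_2$ by hypothesis, $F_{\mathrm s}\in\mathfrak g_\alpha(\delta)$, and therefore $F_{\mathrm s}\in\hat{\mathfrak g}_\alpha(\delta)$ by Definition~\ref{def:guaranteed_cost_structured_control}. Thus $F_{\mathrm s}$ is a stabilizing GCSC.

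\emph{Main obstacle.} The delicate point is the quantifier ``for all $x_0\in\mathcal B(r)$'' in \eqref{eq:g_delta}. The hypothesis \eqref{eq:gc_test_LMI2} is stated for a given $x_0$. If the bound is needed uniformly over the ball, I would interpret or strengthen \eqref{eq:gc_test_LMI2} as $r^2\lambda_{\max}(P_\alpha)<\delta$, or equivalently $P_\alpha \prec (\delta/r^2) I_n$. This yields $x_0^\prime P_\alpha x_0<\delta$ for every $x_0\in\mathcal B(r)$, and the argument above then applies pointwise. I would state the proof in this form and note that it reduces to the given condition when $x_0$ is fixed.
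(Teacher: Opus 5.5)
Your proof is correct and takes essentially the same route as the paper: stability follows from the strict Lyapunov inequality, and then $J_\alpha(F_{\mathrm{s}}) = x_0^\prime Y_\alpha x_0$, where $Y_\alpha$ solves the closed-loop Lyapunov equation and $P_\alpha - Y_\alpha \succ 0$; your trajectory-integration version restates the same comparison. Your point about the quantifier is well taken, since the paper's proof only bounds the cost at the given $x_0$, and the uniform bound over $\mathcal{B}(r)$ required by \eqref{eq:g_delta} is obtained exactly as you propose, via $P_\alpha \prec (\delta/r^2) I_n$, which the paper records separately in Remark~\ref{rem:independent_initcond}.
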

\vspace{-2mm}
\begin{proof}
Since $R_{\mathbf{\alpha}} \succ 0$ and 
$\bar{Q}_{\mathbf{\alpha}}:=Q_{\mathbf{\alpha}}
+ {F_\mathrm{s}}^\prime R_{\mathbf{\alpha}}
F_{\mathrm{s}} \succeq 0$, the existence of 
$P_{\mathbf{\alpha}} \succ 0$ satisfying 
\eqref{eq:gc_test_LMI1} implies that 
$A_{\mathrm{cl}}(F_{\mathrm{s}})$ is stable; 
see \cite[Theorem~4]{Jiao:2019TAC}. Solving 
$\dot{x}(t)=A_{\mathrm{cl}}x(t)$ with 
$x(0)=x_0 \in \mathcal{B}(r)$ gives 
$x(t)=e^{A_{\mathrm{cl}}t}x_0$, and hence
$J_{\mathbf{\alpha}}(F_{\mathrm{s}})
=x_0^\prime Y_{\mathbf{\alpha}}x_0$, where $Y_{\mathbf{\alpha}}
=\int_0^\infty e^{A_{\mathrm{cl}}^\prime t}
\bar{Q}_{\mathbf{\alpha}}
e^{A_{\mathrm{cl}}t}~dt$ is the unique positive semi-definite solution of $A_{\mathrm{cl}}^\prime Y_{\mathbf{\alpha}}
+Y_{\mathbf{\alpha}}A_{\mathrm{cl}}
+\bar{Q}_{\mathbf{\alpha}}=0$; see \cite[Lemma~3]{Jiao:2019TAC}. Defining 
$X_{\mathbf{\alpha}}:=P_{\mathbf{\alpha}}
-Y_{\mathbf{\alpha}}$ and using 
\eqref{eq:gc_test_LMI1}, we obtain
$A_{\mathrm{cl}}^\prime X_{\mathbf{\alpha}}
+X_{\mathbf{\alpha}}A_{\mathrm{cl}}\prec0$, 
which together with the stability of 
$A_{\mathrm{cl}}$ implies 
$X_{\mathbf{\alpha}}\succ0$. Hence, 
$Y_{\mathbf{\alpha}}\preceq P_{\mathbf{\alpha}}$, 
and from \eqref{eq:gc_test_LMI2}, we have $J_{\mathbf{\alpha}}(F_{\mathrm{s}})
=x_0^\prime Y_{\mathbf{\alpha}}x_0
\leq x_0^\prime P_{\mathbf{\alpha}}x_0
<\delta.$
\end{proof}
\begin{remark}
\tb{Theorem~\ref{thm:GC_verification} provides a verification 
condition for a feedback GCSC strategy in an LQ-CDG, whereas \cite[Theorem~5.4]{Roy:2025DGAA} 
provides verification of an output feedback guaranteed 
cost equilibrium in an LQ-NCDG. We note that Theorem~\ref{thm:GC_verification} requires verifying $2$ linear matrix inequalities (LMIs), while \cite[Theorem~5.4]{Roy:2025DGAA} requires verifying $2N$ LMIs. }
\end{remark}
Note that \eqref{eq:gc_test_LMI1}--\eqref{eq:gc_test_LMI2} 
are LMIs in $P_{\alpha} \succ 0$. Hence, using LMI solvers, we can verify whether a given structured strategy is a feedback GCSC. However, this does not provide a synthesis method. \tb{The synthesis of a feedback GCSC depends on the non-emptiness of the set \eqref{eq:set_GC}. 
The following theorem provides sufficient conditions under which the set \eqref{eq:set_GC} is non-empty.}
\begin{lemma}
\label{lem:existence_cop_gc}
%Consider the system \eqref{eq:state_dynamics_cooperation} with cost functional \eqref{eq:joint_weighted_cost}. 
\tb{Consider Problem~\ref{prob:GC}.} 
Let $r > 0$ be given and let $x_0 \in \mathcal{B}(r)$ be an initial condition. 
Let $\delta > 0$ and $\mathbf{\alpha} \in \tb{\mathcal{A}}$ be given. 
Suppose there exists a pair $(P_{\alpha}, F)$ with
$P_{\alpha} \succ 0$ and $F \in \mathbb{R}^{m \times n}$ such that
\begin{subequations}
\label{eq:LMI_gc_existence}
\begin{align}
&(A + B F)^\prime P_{\alpha}
+ P_{\alpha} (A + B F)
+ Q_{\mathbf{\alpha}}
+ {F}^\prime R_{\mathbf{\alpha}}
F \prec 0, \label{eq:LMI_existence1} \\
& x_0^\prime P_{\mathbf{\alpha}} x_0 < \delta, \label{eq:LMI_existence2} \\
& G_i F (I_{n}-C_i^{\prime} (C_iC_i^\prime)^{-1}C_i) =0,~\forall i \in \mathsf{N},  
 \label{eq:structural_existence3}
\end{align}
\end{subequations}
where $G_i := \begin{bmatrix}
0_{m_i \times m_1} & \cdots &I_{m_{i}} &\cdots &0_{m_i \times m_{N}} \end{bmatrix}$. Then $\hat{\mathfrak{g}}_{\mathbf{\mathbf{\alpha}}} (\delta) \neq \emptyset$ and $(A+BF)$ is Hurwitz. 
\end{lemma}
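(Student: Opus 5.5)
The plan is to reduce the statement to Theorem~\ref{thm:GC_verification}. Given a pair $(P_\alpha,F)$ satisfying \eqref{eq:LMI_existence1}--\eqref{eq:structural_existence3}, I would set $F_{\mathrm{s}} := F$. Condition \eqref{eq:structural_existence3} is exactly the structural condition $\mathrm{SC}_2$ in \eqref{eq:struct_condn2}. Hence $F$ qualifies as a structured gain in the sense of Remark~\ref{rem:SC1_SC2}, and each block satisfies $G_iF = F_iC_i$ with $F_i := G_iF C_i^\prime(C_iC_i^\prime)^{-1}$. This means $F$ is implementable under the output feedback information structure \eqref{eq:fbcontrol}.

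Next I would check the hypotheses of Theorem~\ref{thm:GC_verification}. With $A_{\mathrm{cl}}(F_{\mathrm{s}}) = A+BF$, inequality \eqref{eq:LMI_existence1} is literally \eqref{eq:gc_test_LMI1}, and \eqref{eq:LMI_existence2} is \eqref{eq:gc_test_LMI2}, both with the same $P_\alpha \succ 0$. Theorem~\ref{thm:GC_verification} then yields that $F_{\mathrm{s}}$ is a stabilizing GCSC. By Definition~\ref{def:guaranteed_cost_structured_control} this gives $F_{\mathrm{s}} \in \hat{\mathfrak g}_\alpha(\delta)$, so $\hat{\mathfrak g}_\alpha(\delta)\neq\emptyset$, and "stabilizing" means $A+BF$ is Hurwitz.

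For completeness I would also recall directly why $A+BF$ is Hurwitz. Since $Q_\alpha + F^\prime R_\alpha F \succeq 0$, \eqref{eq:LMI_existence1} gives $(A+BF)^\prime P_\alpha + P_\alpha(A+BF) \prec 0$ with $P_\alpha\succ 0$. This is a strict Lyapunov inequality: for any eigenpair $(\lambda,v)$ it yields $2\,\mathrm{Re}(\lambda)\, v^* P_\alpha v < 0$.

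The only subtle point is the quantifier over $x_0$. Definition~\ref{def:guaranteed_cost_set} requires $J_\alpha(F)<\delta$ for all $x_0\in\mathcal{B}(r)$, whereas \eqref{eq:LMI_existence2} is stated for the given $x_0$. I would read it, as Theorem~\ref{thm:GC_verification} implicitly does, as holding for every $x_0 \in \mathcal{B}(r)$. If a uniform condition is preferred, it can be enforced by $r^2 P_\alpha \prec \delta I_n$, which implies \eqref{eq:LMI_existence2} on the whole ball. With that reading the argument is a direct invocation of Theorem~\ref{thm:GC_verification}, and I expect no real obstacle beyond this bookkeeping.
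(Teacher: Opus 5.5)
Your proposal is correct and is essentially the paper's argument. The paper's proof only says it ``follows the same lines'' as Theorem~\ref{thm:GC_verification}, and you make this explicit by setting $F_{\mathrm{s}}:=F$, identifying \eqref{eq:structural_existence3} with $\mathrm{SC}_2$, and matching \eqref{eq:LMI_existence1}--\eqref{eq:LMI_existence2} with \eqref{eq:gc_test_LMI}; your point about the quantifier over $x_0$ is a fair one that the paper leaves implicit here, and your uniform fix $r^2P_\alpha \prec \delta I_n$ is the same condition the paper gives in Remark~\ref{rem:independent_initcond}.
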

\vspace{-2mm}
\begin{proof}
Follows the same lines as that of Theorem \ref{thm:GC_verification}.  
\end{proof}
\vspace{-2mm}
\tb{Note that \eqref{eq:LMI_existence1} is a 
bilinear matrix inequality (BMI) in the variables 
$(P_{\alpha}, F)$, which is non-convex 
\cite{Safonov:96}. Using the projection lemma 
\cite[Lemma 2]{Iwasaki:94} and Schur complement lemma 
\cite[Lemma 2.8]{Duan:13}, this BMI is converted to an 
LMI in the following result.} 
% Note that \eqref{eq:LMI_existence1} is a bilinear matrix inequality (BMI) in the variables $(P_{\alpha}, F)$. It is well known that BMI constraint is non-convex \cite{Safonov:96}. For solvability using LMI solvers, the BMI \eqref{eq:LMI_existence1} must be converted into an LMI constraint. To address this difficulty, in the follwoing result, we use projection lemma \cite{Iwasaki:94} and schur complement lemma \cite{Duan:13}. 
%
% \begin{lemma}(\cite{Iwasaki:94}) 
% \label{lem:Finsler}
% 	Let $  E \in \mathbb R^{n\times m}$, $\text{rank}(E)=m<n$,
% 	$H\in \mathbb R^{s\times n}$, $\text{rank}(H)=s<n$, and 
% 	$\Omega\in \mathbb R^{n\times n}$,~$\Omega=\Omega^\prime$, be given. Then, there exists $F\in \mathbb R^{m\times s}$
% 	satisfying $\Omega+E F H+ (E F H)^\prime  \prec 0$
% 	if and only if $\tb{\mathcal N}_{E^\prime}^\prime \Omega \tb{\mathcal N}_{E^\prime} \prec 0$ and $\tb{\mathcal N}_H^\prime \Omega \tb{\mathcal N}_H \prec 0$ hold, where $\tb{\mathcal N}_{E^\prime}
% 	\in \mathbb R^{n\times (n-m)}$, $\tb{\mathcal N}_H\in \mathbb R^{n\times (n-s)}$, denoting any matrices whose columns form orthonormal
% 	bases of the null spaces of $E^\prime$, $H$ respectively.
% \end{lemma} 
%
\begin{theorem} 
\label{thm:GC_cop_synthesis}
% Consider the system \eqref{eq:state_dynamics_cooperation} with cost functional \eqref{eq:joint_weighted_cost}. 
\tb{Consider Problem~\ref{prob:GC}.} Let $r > 0$ be given and let $x_0 \in \mathcal{B}(r)$ be an initial condition. 
Let $\delta > 0$ and $\mathbf{\alpha} \in \tb{\mathcal{A}}$ be given. Consider the following set
\begin{align}
	\tb{\mathcal{Y}_{\alpha}} &:=\left\{Y \in \mathbb R^{n\times n}~\big|~Y\succ 0, ~\left[\begin{smallmatrix}\delta &x_0^\prime\\x_0&Y\end{smallmatrix} \right]\succ  0,~\tb{\mathcal N_{\hat{B}}}^\prime\Omega_{\alpha}(Y)\tb{\mathcal N_{\hat{B}}}\prec 0 \right\},\label{eq:YLMI}
\end{align} 
    \vspace{-1ex}
    \text{where,} 
    \vspace{1ex}
            \begin{align}
			\Omega_{\alpha} (Y) &:=\begin{bmatrix} 
				YA^\prime + A Y &(\sqrt{Q_\alpha}Y)^\prime& 0_{n\times m }\\
				\sqrt{Q_\alpha}Y& -I_{n}& 0_{ n \times m}\\
				0_{m \times n}&0_{m \times n  }&-R_{\alpha}^{-1} \end{bmatrix},	
		\end{align}	
		and $\tb{\mathcal N_{\hat{B}}}:=\ker \left(\begin{bmatrix}B^\prime & 0_{m \times n}&I_{m}\end{bmatrix}\right)$ denote a matrix with orthonormal columns which span the null spaces of the matrix
		$\hat{B}=\begin{bmatrix} B^\prime & 0_{m \times n} &I_{m} \end{bmatrix}$. Define the set
		\begin{align}
			\tb{\mathcal{P}_{\alpha}} &:=\left\{P_{\alpha} \in \mathbb R^{n\times n} ~|~P_{\alpha} \succ 0, ~P_{\alpha}^{-1}\in \tb{\mathcal{Y}_{\alpha}} \right\}.\label{eq:setPi}
		\end{align}
        Suppose that $\tb{\mathcal{P}_{\alpha}} \neq \emptyset$. For any $P_{\mathbf{\alpha}} \in \tb{\mathcal{P}_{\alpha}}$, if there exists $F \in \mathbb{R}^{m \times n}$ satisfying the LMIs 
		\begin{align} 
			&\begin{bmatrix} 
				(A + BF)^\prime P_{\alpha} +P_{\alpha}(A + BF) &(\sqrt{Q_{\alpha}})^\prime& (\sqrt{R_{\alpha}}F)^\prime\\
				\sqrt{Q_\alpha}& -I_{n} & 0_{n \times m}\\
				\sqrt{R_\alpha}F&0_{m\times n }&-I_{ {m}} \end{bmatrix}\prec 0.
			\label{eq:Feedback_cop_GC} \\
        & G_i F(I_n -C_i^{\prime} (C_iC_i^\prime)^{-1}C_i) =0,~ \forall i \in \mathsf{N}, 
        \label{eq:Feedback_cop_structured}
		\end{align}   
   where $G_i := \begin{bmatrix}
0_{m_i \times m_1} & \cdots &I_{m_{i}} &\cdots &0_{m_i \times m_{N}} \end{bmatrix}$. Then, $F \in \hat{\mathfrak{g}}_{\alpha}(\delta)$, and $(A+BF)$ is Hurwitz. 
\end{theorem}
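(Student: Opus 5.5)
The plan is to reduce the statement to Lemma~\ref{lem:existence_cop_gc}, that is, to Theorem~\ref{thm:GC_verification}, by showing that any $F$ satisfying \eqref{eq:Feedback_cop_GC}--\eqref{eq:Feedback_cop_structured} for some $P_\alpha\in\mathcal{P}_\alpha$ makes the triple of conditions \eqref{eq:LMI_existence1}--\eqref{eq:structural_existence3} hold.

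\emph{Step 1: the bound on $x_0$.} Let $Y:=P_\alpha^{-1}$. Since $Y\in\mathcal{Y}_\alpha$, the Schur complement lemma applied to $\left[\begin{smallmatrix}\delta & x_0^\prime\\ x_0 & Y\end{smallmatrix}\right]\succ 0$ with $Y\succ 0$ gives $\delta - x_0^\prime Y^{-1}x_0>0$. Hence $x_0^\prime P_\alpha x_0<\delta$, which is \eqref{eq:LMI_existence2}.

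\emph{Step 2: the Lyapunov inequality.} The $(2,2)$ and $(3,3)$ blocks of \eqref{eq:Feedback_cop_GC} equal $-I$. A Schur complement with respect to these blocks shows that \eqref{eq:Feedback_cop_GC} is equivalent to
\[
(A+BF)^\prime P_\alpha + P_\alpha(A+BF) + Q_\alpha + F^\prime R_\alpha F \prec 0 ,
\]
using $\sqrt{Q_\alpha}^\prime\sqrt{Q_\alpha}=Q_\alpha$ and $(\sqrt{R_\alpha}F)^\prime\sqrt{R_\alpha}F=F^\prime R_\alpha F$. This is \eqref{eq:LMI_existence1}.

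\emph{Step 3: conclusion.} Condition \eqref{eq:Feedback_cop_structured} is literally \eqref{eq:structural_existence3}, so $F$ satisfies \eqref{eq:struct_condn2}. Applying Theorem~\ref{thm:GC_verification} with $F_{\mathrm{s}}=F$ then shows that $A+BF$ is Hurwitz and that $J_\alpha(F)\le x_0^\prime P_\alpha x_0<\delta$. Since this holds for every $x_0\in\mathcal{B}(r)$ for which the constraint in $\mathcal{Y}_\alpha$ is imposed, we get $F\in\hat{\mathfrak g}_\alpha(\delta)$.

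\emph{Role of the projection condition.} The condition $\mathcal N_{\hat B}^\prime\Omega_\alpha(Y)\mathcal N_{\hat B}\prec 0$ is not needed for the implication as stated, because solvability of \eqref{eq:Feedback_cop_GC} for $F$ is assumed. I would still remark why it is the natural feasibility certificate. Congruence of \eqref{eq:Feedback_cop_GC} by $Y\oplus I\oplus \sqrt{R_\alpha}^{-1}$ and the substitution $K=FY$ put the inequality in the form $\Omega_\alpha(Y)+\hat B^\prime K\,[\,I~0~0\,]+(\cdot)^\prime\prec 0$. By the projection lemma \cite[Lemma 2]{Iwasaki:94}, an unstructured $K$ exists if and only if two projected inequalities hold: the one in $\mathcal{Y}_\alpha$, and the projection onto the kernel of $[\,I~0~0\,]$. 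The latter is automatically true, since it reduces to $\mathrm{diag}(-I,-R_\alpha^{-1})\prec0$.

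\emph{Main obstacle.} The delicate point is matching $\Omega_\alpha$ exactly, in particular the sign and scaling conventions in the third block and the $I_m$ in $\hat B$, so that the congruence reproduces the given matrix. I would check this by direct block multiplication. The structural constraint \eqref{eq:Feedback_cop_structured} is linear in $F$ for fixed $P_\alpha$, which is why the problem remains a convex LMI feasibility problem. The projection lemma does not guarantee that the structured $F$ exists, which explains the theorem's ``if there exists $F$'' hypothesis.
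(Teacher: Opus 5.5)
Your proposal is correct and follows essentially the paper's route. Schur complements turn \eqref{eq:Feedback_cop_GC} and the $\delta$-block of $\mathcal{Y}_\alpha$ into \eqref{eq:LMI_existence1}--\eqref{eq:LMI_existence2}, Lemma~\ref{lem:existence_cop_gc} (that is, Theorem~\ref{thm:GC_verification}) finishes the argument, and the projection lemma serves only to explain the set $\mathcal{P}_\alpha$; your congruence by $Y\oplus I_n\oplus\sqrt{R_\alpha}^{-1}$ with $K=FY$ is equivalent to the paper's kernel relation $\mathcal{N}_{\bar{B}}=(P_\alpha^{-1}\oplus I_n\oplus I_m)\mathcal{N}_{\hat{B}}$.

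Your remarks that the projected inequality is redundant once $F$ is assumed to exist, and that the projection lemma says nothing about a \emph{structured} $F$, are accurate and make explicit what the paper leaves implicit. Like the paper, your last step only certifies the bound for the $x_0$ fixed in $\mathcal{Y}_\alpha$; uniformity over $\mathcal{B}(r)$ would require $P_\alpha\prec\frac{\delta}{r^2}I_n$, as in Remark~\ref{rem:independent_initcond}.
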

\vspace{-2mm}
\begin{proof} 
Since $Q_{\mathbf{\alpha}} \succeq 0$, using decomposition  $Q_{\mathbf{\alpha}} = \sqrt{Q_{\mathbf{\alpha}}}.\sqrt{Q_{\mathbf{\alpha}}}$ and applying the Schur complement lemma (see \cite[Lemma 2.8]{Duan:13}) in the BMI \eqref{eq:LMI_existence1} is equivalently written as
\begin{align}
\bar{\Omega}_{\alpha}(P_{\alpha}) + \bar{B}^\prime F \bar{C} + \bar{C}^\prime {F}^\prime \bar{B} \prec 0,
\label{eq:BMI_1}
\end{align}
where,  $\bar{\Omega}_{\alpha} (P_{\mathbf{\alpha}}):=
        \left[\begin{smallmatrix}
         A^\prime P_{\mathbf{\alpha}} + P_{\mathbf{\alpha}} A    &\sqrt{Q}_{\alpha} &0_{n \times m} \\
          \sqrt{Q}_{\alpha} &-I_{n} &0_{n \times m}\\
         0_{m \times n} &0_{n \times m} &-R_{\mathbf{\alpha}}^{-1}
    \end{smallmatrix} \right]$, $\bar{B} =  \left[\begin{smallmatrix}
    B^\prime P_{\mathbf{\alpha}} & 0_{m \times n} &I_{m}
\end{smallmatrix} \right]$, $    
\bar{C} = \left[\begin{smallmatrix}
    I_n & 0_{n \times n} &0_{n \times m}
\end{smallmatrix} \right]$. Using projection lemma (see \cite[Lemma 2]{Iwasaki:94}), the BMI \eqref{eq:BMI_1} in the variables $\left(F, P_{\mathbf{\alpha}}\right)$ is feasible if and only if the following LMIs \eqref{eq:BMI_2} in the variable $P_{\mathbf{\alpha}}$ are feasible.
\begin{align}
{\mathcal{N}_{\bar{C}}}^\prime \bar{\Omega}_{\alpha}(P_{\mathbf{\alpha}}) \tb{\mathcal N_{\bar{C}}} \prec 0, ~{\mathcal{N}_{\bar{B}}}^\prime \bar{\Omega}_{\alpha} (P_{\mathbf{\alpha}}) \tb{\mathcal N_{\bar{B}}} \prec 0.
\label{eq:BMI_2}
\end{align}
Here, $\tb{\mathcal N_{\bar{B}}} :=\ker (\left[\begin{smallmatrix} B^\prime P_{\mathbf{\alpha}} & 0_{m \times n} &I_{m}\end{smallmatrix} \right])$  and is related to $\tb{\mathcal N_{\hat{B}}}$ as $ \tb{\mathcal N_{\bar{B}}}  = \left(P_{\mathbf{\alpha}}^{-1} \oplus I_{n} \oplus I_{m} \right)  \tb{\mathcal N_{\hat{B}}}$. 
Using this relation in \eqref{eq:BMI_2} (as a result, \eqref{eq:LMI_existence1}) is equivalently written as 
\begin{align}
	{\mathcal{N}_{\hat{B}}}^\prime \Omega_{\alpha}(P_{\mathbf{\alpha}}^{-1}) \tb{\mathcal N_{\hat{B}}} \prec 0.
	\label{eq:BMI_3}
\end{align}
Using the Schur complement, the inequality \eqref{eq:LMI_existence2} can be equivalently written as
	\begin{align}
		\left[\begin{smallmatrix}
			\delta &x_0^\prime \\
			x_0 &P_{\mathbf{\alpha}}^{-1}
		\end{smallmatrix} \right]
		\succ 0, 
		\label{eq:LMI_2_ub}
	\end{align}	 
where $x_0 \in \mathcal{B}(r)$. We note that the matrix inequalities \eqref{eq:BMI_3} and \eqref{eq:LMI_2_ub} define the set 
$\tb{\mathcal{P}_{\mathbf{\alpha}}}$ as given in \eqref{eq:setPi}. 
If $\tb{\mathcal{P}_{\mathbf{\alpha}}} \neq \emptyset$, then there exists 
$P_{\mathbf{\alpha}} \succ 0$ such that 
$P_{\mathbf{\alpha}}^{-1} \in \tb{\mathcal{Y}_{\alpha}}$, and hence 
$P_{\mathbf{\alpha}}$ is feasible for \eqref{eq:BMI_3} and \eqref{eq:LMI_2_ub}. It then follows from Lemma~\ref{lem:existence_cop_gc} that, for any 
$P_{\mathbf{\alpha}} \in \tb{\mathcal{P}_{\mathbf{\alpha}}}$, 
if there exists $F \in \mathbb{R}^{m \times n}$ satisfying 
\eqref{eq:Feedback_cop_GC}-\eqref{eq:Feedback_cop_structured}, 
then $F \in \hat{\mathfrak{g}}_{\mathbf{\alpha}}(\delta)$ and the closed-loop system
$A+BF$ is Hurwitz.
\end{proof}
%
%\vspace{-1.9mm}
\begin{remark}
\label{rem:comparision_Roy2025}
\tb{Theorem~\ref{thm:GC_cop_synthesis} and 
\cite[Theorem~5.7]{Roy:2025DGAA} are both 
LMI-based synthesis results that rely on 
fundamental results from the LMI literature, 
namely the projection lemma \cite{Iwasaki:94} and the Schur 
complement lemma \cite{Duan:13}. However, they 
differ in scope and computational method. 
Specifically, Theorem~\ref{thm:GC_cop_synthesis} addresses the synthesis problem in an LQ-CDG framework, whereas \cite[Theorem~5.7]{Roy:2025DGAA} considers an LQ-NCDG setting.
We note that the feasibility conditions in \cite[Theorem~5.7, Eq.~(27e)]{Roy:2025DGAA} must be verified separately for each of the $N$ players. Furthermore, since the resulting feasibility set in \cite[Theorem~5.7, Eq.~(27e)]{Roy:2025DGAA} is non-convex, a semi-definite programming (SDP) relaxation is required; see \cite{Leibfritz:01}. Consequently, the computational procedure depends on the sequential linear programming matrix method (SLPMM); see \cite[Sec.~6.2]{Roy:2025DGAA} for details. In contrast, the set \eqref{eq:setPi} in 
Theorem~\ref{thm:GC_cop_synthesis} is convex. As a result, SDP relaxation and SLPMM are not required. Moreover, in the proposed GCSC framework, the feasibility of set $\eqref{eq:setPi}$ needs to be verified only once.}
%This demonstrates a significant computational 
%advantage of the cooperative framework.}
\end{remark}
\begin{remark}
For computational convenience, we restrict our attention to closed sets. Thus, we consider the following $\epsilon$-approximation of the convex open set \eqref{eq:YLMI}: 
% \begin{align}
%     \tb{\mathcal{Y}^{\epsilon}} := \left \{ Y \in \mathbb{R}^{n \times n} ~\mid~ Y \succ 0,~\left[\begin{smallmatrix}\delta & x_0^\prime \\ x_0 & Y\end{smallmatrix}\right] \succeq \epsilon I_{n+1},~ \tb{\mathcal{N}}_{B}^\prime \Omega(Y) \tb{\mathcal{N}}_{B} \preceq -\epsilon I_{2n+m} \right \}
% \end{align}
\begin{align}
    \tb{\mathcal{Y}_{\alpha}^{\epsilon}} := \Big \{ Y \in \mathbb{R}^{n \times n} ~\big|~ & Y \succ 0,~ \left[\begin{smallmatrix}\delta & x_0^\prime \\ x_0 & Y\end{smallmatrix}\right] \succeq \epsilon I_{n+1}, \notag \\
    & {\mathcal{N}_{\hat{B}}}^\prime \Omega(Y) \mathcal{N}_{\hat{B}} \preceq -\epsilon I_{2n+m} \Big \}. 
\label{eq:set_y_eps}
\end{align}
Using the set in \eqref{eq:set_y_eps}, we obtain the approximate set of \eqref{eq:setPi}, denoted by $\tb{\mathcal{P}_{\mathbf{\alpha}}^{\epsilon}}$. If $\tb{\mathcal{P}_{\mathsf{\alpha}}^{\epsilon}} \neq \emptyset$, then for any feasible $P_{\mathbf{\alpha}} \in \tb{\mathcal{P}_{\mathbf{\alpha}}^{\epsilon}} $, we compute $F_{\mathrm{s}}$ using \eqref{eq:Feedback_cop_GC}-\eqref{eq:Feedback_cop_structured} in Theorem \ref{thm:GC_cop_synthesis}. We then verify whether $F_{\mathrm{s}}$ is a feedback GCSC strategy using Theorem~\ref{thm:GC_verification}.
\label{rem:computation_GCSC}
\end{remark}
\begin{remark}
\label{rem:independent_initcond}
The design of the feedback GCSC can be made independent of a specific initial condition 
$x_0 \in \mathcal{B}(r)$, where $r>0$ is given. For a given $\delta>0$, the conditions 
\eqref{eq:gc_test_LMI2} and \eqref{eq:LMI_existence2} can be equivalently written as an 
LMI in the variable $P_{\alpha} \succ 0$, given by $P_{\alpha} \prec \frac{\delta}{r^2} I_n$; see 
\cite{Jiao:2019TAC}. As a result, the LMI set in \eqref{eq:YLMI} can be written equivalently as 
% $\tb{\mathcal{Y}}
% := \left \{
% Y \in \mathbb{R}^{n\times n}
% ~\big|~
% Y \succ 0,\;
% Y - \tfrac{r^2}{\delta} I_n \succ 0,\;
% \tb{\mathcal{N}}_{B}^{\prime}\Omega(Y)\tb{\mathcal{N}}_{B} \prec 0
% \right \}.$
\begin{align}
    \tb{\mathcal{Y}_{\alpha}} := \big \{ Y \in \mathbb{R}^{n\times n} ~\big|~ & Y \succ 0,\; Y - \tfrac{r^2}{\delta} I_n \succ 0, \notag \\
    & {\mathcal{N}_{\hat{B}} }^\prime \Omega_{\alpha}(Y)\mathcal{N}_{\hat{B}} \prec 0 \big \}.
\end{align}
\end{remark} 
\begin{remark}
\tb{Unlike problem $\mathrm{PO}$ in \eqref{eq:PO}, the $\mathrm{GC}$ problem in \eqref{eq:GCproblem} has an additional design parameter $\delta>0$, which plays a crucial role in the existence of $F_{\mathrm{s}} \in \hat{\mathfrak{g}}_{\alpha}(\delta)$.
By Lemma~\ref{lem:monotonicity_properties}, the 
feasible set $\hat{\mathfrak{g}}_{\alpha}(\delta)$ 
varies monotonically with $\delta$, so one can 
gradually increase $\delta$ until the set becomes 
non-empty. This allows us to identify the smallest 
performance bound for which 
$F_{\mathrm{s}} \in \hat{\mathfrak{g}}_{\alpha}(\delta)$.} 
\end{remark}
\begin{remark} \tb{Distributed control of MAS, where agents 
communicate over a directed graph, can be seen as a specific case of the proposed GCSC framework. Let $n_i$ 
denote the number of state variables of agent 
$i$, so that $n = \sum_i n_i$ with dynamics 
\eqref{eq:state_dynamics}. The structured 
feedback matrix $F_{\mathrm{s}} \in 
\mathbb{R}^{m \times n}$ can be partitioned 
into blocks $F_{\mathrm{s}}(i,j) \in 
\mathbb{R}^{m_i \times n_j}$, where 
$F_{\mathrm{s}}(i,j) = 0_{m_i \times n_j}$ 
whenever $j \notin \mathsf{N}_i \cup \{i\}$ 
for all $i, j \in \mathsf{N}$. This ensures 
that agent $i$ uses information only from its 
in-neighbors (including itself), and this 
sparsity constraint induced by the network is 
equivalent to structural constraint in \eqref{eq:Feedback_cop_structured}.}  
\end{remark}
\begin{remark}
\tb{For the full state feedback case 
$C_i = I_n$ for all $i \in \mathsf{N}$, all results in this work can be obtained as a special case. In particular, the synthesis in 
Theorem~\ref{thm:GC_cop_synthesis} does not require the structural constraint 
\eqref{eq:Feedback_cop_structured}, since \eqref{eq:Feedback_cop_structured} is trivially satisfied. } 
\end{remark}
\section{Numerical illustrations} 
\label{sec:numerical}
 In this section, we illustrate the performance of the proposed feedback GCSC through numerical examples. 
\begin{example}
\tb{Recall that the existence of a Pareto 
optimal solution is unclear for the game in 
Example~\ref{exp:non_existence}. The proposed 
GCSC framework, however, yields a feasible 
structured controller for the same game 
parameters, as demonstrated below.} Let $x_0 = (1, 1.2)$. We first consider the 
non-cooperative game setting, where each agent minimizes its own cost independently. The approximate output feedback Nash equilibrium strategies are $\tilde{F}_1^{\mathrm{NE}} = -0.7593$ 
and $\tilde{F}_2^{\mathrm{NE}} = -0.4117$; see 
\cite{Engwerda:08}. Denoting 
$\tilde{F} := (\tilde{F}_1^{\mathrm{NE}}, 
\tilde{F}_2^{\mathrm{NE}})$, the corresponding 
individual costs are $\tilde{J}_1^{\mathrm{NE}}
(\tilde{F}) = 1.3939$ and 
$\tilde{J}_2^{\mathrm{NE}}(\tilde{F}) = 1.2339$. We now consider the cooperative game setting. 
For $\delta = 1.75$, the Nash bargaining 
solution (see \cite[Ch.~6]{Engwerda:05}) gives 
$\alpha^{\star} = (0.9048, 0.0952) \in 
\mathcal{A}(\delta)$. \tb{Following 
Remark~\ref{rem:computation_GCSC}, we obtain 
the feedback GCSC strategy
% \begin{align*}
%     F_{\mathrm{s}} = 
%     \begin{bmatrix}
%         -0.9818 & 0 \\ 0 & -0.6643
%     \end{bmatrix}.
% \end{align*}
$F_{\mathrm{s}} = 
   [ \begin{smallmatrix}
        -0.9818 & 0 \\ 0 & -0.6643
    \end{smallmatrix} ].$
}\tb{The individual costs under the GCSC 
strategy are $J_1(F_{\mathrm{s}}) = 1.3816$ 
and $J_2(F_{\mathrm{s}}) = 1.2125$. Since $J_i(F_{\mathrm{s}}) < 
\tilde{J}_i^{\mathrm{NE}}(\tilde{F})$ for 
$i = 1, 2$, each player achieves a lower cost 
in the cooperative setting than in the non-cooperative setting.} Moreover, 
$J_{\alpha^{\star}}(F_{\mathrm{s}}) = 1.3655 
< \delta$, verifying 
Theorem~\ref{thm:GC_verification}. \tb{Finally, 
$J_{\mathrm{OPT}} = 2.5670$ and 
$\eta_2(F_{\mathrm{s}}) = 1.0106$, giving a 
performance loss of $ \frac{J_{\mathrm{GC}}(F_{\mathrm{s}}) - 
    J_{\mathrm{OPT}}}{J_{\mathrm{OPT}}} 
    \times 100\% = 
    (\eta_2(F_{\mathrm{s}}) - 1) \times 100\% 
    = 1.06\%$
% \begin{align*}
%     \frac{J_{\mathrm{GC}}(F_{\mathrm{s}}) - 
%     J_{\mathrm{OPT}}}{J_{\mathrm{OPT}}} 
%     \times 100\% = 
%     (\eta_2(F_{\mathrm{s}}) - 1) \times 100\% 
%     = 1.06\%
% \end{align*}
relative to $J_{\mathrm{OPT}}$.}
\end{example}

\begin{example} 
\label{ex:microgrid}
We consider the microgrid tracking 
synchronization problem from 
\cite[Sec.~V]{Lewis:13}; see also \cite{Roy:22, Cappello:21}. 
Here, we compare the proposed GCSC method 
against distributed cooperative control in 
\cite{Lewis:13}. The microgrid consists of 
$4$ networked distributed generators, 
$\mathsf{N} = \{1,2,3,4\}$, with 
double-integrator dynamics given by
\begin{align}
\dot{\xi}_i = A_g \xi_i + B_g v_i,~ 
i \in \mathsf{N},
\end{align}
where 
$A_g = \left[\begin{smallmatrix} 0 & 1 \\ 0 & 0 
\end{smallmatrix}\right]$ and 
$B_g = \left[\begin{smallmatrix} 0 \\ 1 
\end{smallmatrix}\right]$. The state 
$\xi_i := [\xi_i^1~\xi_i^2]^\prime \in 
\mathbb{R}^2$ represents the terminal voltage 
and its rate of change, and $v_i \in \mathbb{R}$ 
is the primary control input. A reference 
generator evolves as $\dot{\xi}_0 = \bar{A}\xi_0$ 
with constant voltage reference 
$\xi_0 = [\xi_r~0]^\prime$. Only agent~$1$ 
accesses $\xi_0$ directly; the remaining agents 
obtain information through the directed network; 
see \cite[Fig.~7]{Lewis:13}. The objective is to synchronize the terminal voltage of each distributed generator to the fixed reference voltage, i.e.,
$\lim_{t\to\infty} \|\xi_i(t)-\xi_0 \|=0$ for all $i \in \mathsf{N}$. In \cite{Lewis:13}, this problem is addressed 
using distributed cooperative control with 
quadratic performance indices for each 
distributed generator
\begin{align}
    \bar{J}_i = \tfrac{1}{2} \int_0^\infty 
    \left((\xi_i - \xi_0)^\prime Q 
    (\xi_i - \xi_0) + v_i^\prime R v_i 
    \right) dt,~i\in \mathsf{N},
\end{align}
where $Q = \left[\begin{smallmatrix} 50000 & 0 
\\ 0 & 1 \end{smallmatrix}\right]$ and 
$R = 0.01$.
The resulting distributed controllers take the form $v_1=-cK(\xi_1-\xi_0)$,~$v_2=-cK(\xi_2-\xi_1)$,~$v_3=-cK(\xi_3-\xi_2)$,~$v_4=-cK(\xi_4-\xi_1)$, with coupling gain $c>0$ and $K=[2236\;67.6]$. 
In this work, the same problem is reformulated within the LQ differential game framework \eqref{eq:LQDG} to compare it to the proposed GCSC strategy. We introduce relative coordinate: 
$x_1:=\xi_1-\xi_0$,~$x_2:=\xi_2-\xi_1$,~$x_3:=\xi_3-\xi_2$, $x_4:=\xi_4-\xi_1$ 
and inputs $u_i:=v_i$, the dynamics become $\dot{x}_1 = A_g x_1 + B_g u_1,~\dot{x}_2 = A_g x_2 + B_g u_2 - B_g u_1$,~$\dot{x}_3 = A_g x_3 + B_g u_3 - B_g u_2$, ~$\dot{x}_4 = A_g x_4 + B_g u_4 - B_g u_1$. The following objective of each distributed generator aligned with the network topology
\begin{align}
J_i = \tfrac{1}{2} \int_0^\infty 
(x_i^\prime Q x_i + u_i^\prime R u_i)dt,~i \in \mathsf{N}, 
\end{align}
which differs from $\bar{J}_i$, $i \in \mathsf{N}$, where every agent has access to the reference 
voltage $\xi_0$.
The local information based on communication topology
$y_1=x_1$, 
$y_2= [x_1^\prime ~ x_2^\prime]^\prime$, 
$y_3= [x_2^\prime ~ x_3^\prime]^\prime$, 
$y_4= [x_1^\prime ~ x_4^\prime]^\prime$. The system parameters in \eqref{eq:state_dynamics} and \eqref{eq:output_vector} are 
$A=I_4\otimes A_g$, $B_1=\left[\begin{smallmatrix} 1 & -1&0&-1\end{smallmatrix}\right]^\prime \otimes B_g$,
$B_2=\left[\begin{smallmatrix} 0 & 1&-1&0\end{smallmatrix}\right]^\prime \otimes B_g$,
$B_3=\left[\begin{smallmatrix} 0 & 0&1&0\end{smallmatrix}\right]^\prime \otimes B_g$,
$B_4=\left[\begin{smallmatrix} 0 & 0&0&1\end{smallmatrix}\right]^\prime \otimes B_g$, and  $C_i$ matrices are obtained using the local information vector $y_i$.
The weight matrices in the objectives $ {Q}_1=\tfrac{1}{2}Q$, ${Q}_i=\tfrac{1}{2}\left[\begin{smallmatrix}0&0\\0&1 \end{smallmatrix}\right]\otimes Q$, $i=2,3,4$, and $R_i=\tfrac{1}{2}R$, $i=1,2,3,4$. For simulations, we set $c=1$, 
$\xi_i(0)=[1\;0]^\prime$, 
$\xi_0(0)=[0.95\;0]^\prime$, $\delta = 2.5$, and $\alpha = (0.25, 0.25, 0.25, 0.25)$. The existence of a Pareto optimal solution is also unclear for this example. Following Remark~\ref{rem:computation_GCSC}, we obtain the feedback GCSC strategy as follows
\begin{align*}
F_{\mathrm{s}} =
\left[\begin{smallmatrix}
-2320.7 & -84.0 & 0 & 0 & 0 & 0 & 0 & 0 \\
-1194.4 & -70.8 & -2440.5 & -88.0 & 0 & 0 & 0 & 0 \\
0 & 0 & 75.3 & -22.0 & -2350.9 & -69.4 & 0 & 0 \\
-1014.8 & -56.6 & 0 & 0 & 0 & 0 & -2205.0 & -78.2
\end{smallmatrix} \right]. 
\end{align*}
We obtain $J_{\alpha}(F_{\mathrm{s}}) = 
1.2972 < \delta$, verifying 
Theorem~\ref{thm:GC_verification}. Moreover, 
$J_{\mathrm{OPT}} = 4.9275$ and 
$J_{\mathrm{GC}}(F_{\mathrm{s}}) = 5.1887$. 
\tb{Hence, $\eta_2(F_{\mathrm{s}}) = 1.0530
< \frac{N\delta}{J_{\mathrm{OPT}}} = 2.0294$, 
verifying Corollary~\ref{cor:eta2_upperbound}, 
and corresponds to a performance loss of 
$(\eta_2(F_{\mathrm{s}}) - 1) \times 100\% = 
5.30\%$ relative to $J_{\mathrm{OPT}}$.} To compare the team cost using $\bar{J}_i$, we obtain $\sum_i \bar{J}_i = 9.7500$ under the distributed cooperative control scheme in \cite{Lewis:13}, whereas using the obtained feedback $F_{\mathrm{s}}$ gives $\sum_i \bar{J}_i = 8.2992$. Thus, the proposed feedback GCSC reduces the total team cost compared with the distributed cooperative approach in \cite{Lewis:13}.
\begin{figure}[h]
\centering
\hspace*{-0.6cm}
\begin{subfigure}[t]{0.22\textwidth}
    \centering
    \begin{tikzpicture}[scale=.50,>=latex']
	\tikzset{every pin/.append style={font=\large}}
	\begin{axis}[xmin=0,xmax=0.4,ymin=0.92, ymax=1, 
    xlabel = {$t$ (time units)},
    ylabel={$\xi_{i}^1[V]$},legend pos = north east,
		grid=both, legend style={nodes={scale=1.35 }}, 
		grid style={line width=.1pt, draw=gray!25},
        ytick={0.92,0.94,0.95,0.96,0.98,1} ] 
		\addplot[color=red!85, line width=1.8pt, solid] table{Data/GCCstate11.dat}; \addlegendentry{Agent 1: $\xi_{1}^1$};
		\addplot[color=green!85!black, line width=1.8pt, solid] table{Data/GCCstate21.dat}; \addlegendentry{Agent 2: $\xi_{2}^1$};
		\addplot[color=blue!85, line width=1.8pt, solid] table{Data/GCCstate31.dat}; \addlegendentry{Agent 3: $\xi_{3}^1$};
		\addplot[color=black!85, line width=1.8pt, solid] table{Data/GCCstate41.dat}; \addlegendentry{Agent 4: $\xi_{4}^1$}
        \addplot[purple,dashed, line width=1.4pt] coordinates {(0,0.95) (0.4,0.95)};
	\end{axis}  
\end{tikzpicture} 
    \label{fig:statesplot1GCC}
\end{subfigure}
%\hspace{0.01\textwidth}
~
\begin{subfigure}[t]{0.22\textwidth}
    \centering
    \begin{tikzpicture}[scale=.50,>=latex']
	\tikzset{every pin/.append style={font=\large}} 
	\begin{axis}[xmin=0,xmax=0.4,ymin=-1.3, ymax=0.4, 
    xlabel = {$t$ (time units)},
    ylabel={$\xi_{i}^2[V]$},legend pos = south east,
	grid=both, legend style={nodes={scale=1.35 }}, 
	grid style={line width=.1pt, draw=gray!25},] 
	\addplot[color=red!85, line width=1.8pt, solid] table{Data/GCCstate12.dat}; \addlegendentry{Agent 1: $\xi_{1}^2$};
	\addplot[color=green!85!black, line width=1.8pt, solid] table{Data/GCCstate22.dat}; \addlegendentry{Agent 2: $\xi_{2}^2$};
	\addplot[color=blue!85, line width=1.8pt, solid] table{Data/GCCstate32.dat}; \addlegendentry{Agent 3: $\xi_{3}^2$};
	\addplot[color=black!85, line width=1.8pt, solid] table{Data/GCCstate42.dat}; \addlegendentry{Agent 4: $\xi_{4}^2$}	
    \addplot[purple,dashed, line width=1.4pt] coordinates {(0,0) (0.4,0)};
\end{axis}
\end{tikzpicture} 
    \label{fig:statesplot2GCC}
\end{subfigure} \\ [-0.25ex]
\hspace*{-0.6cm}
\begin{subfigure}[t]{0.22\textwidth}
    \centering
    \begin{tikzpicture}[scale=.50,>=latex']
	\tikzset{every pin/.append style={font=\large}}
	\begin{axis}[xmin=0,xmax=0.4,ymin=0.92, ymax=1, 
    xlabel = {$t$ (time units)},
    ylabel={$\xi_{i}^1[V]$},legend pos = north east,
		grid=both, legend style={nodes={scale=1.35 }}, 
		grid style={line width=.1pt, draw=gray!25}, ytick={0.92,0.94,0.95,0.96,0.98,1} ] 
		\addplot[color=red!85, line width=1.8pt, dash dot] table{Data/lewisy11.dat}; \addlegendentry{Agent 1: $\xi_{1}^1$};
		\addplot[color=green!85!black, line width=1.8pt, dash dot] table{Data/lewisy21.dat}; \addlegendentry{Agent 2: $\xi_{2}^1$};
		\addplot[color=blue!85, line width=1.8pt, dash dot] table{Data/lewisy31.dat}; \addlegendentry{Agent 3: $\xi_{3}^1$};
		\addplot[color=black!85, line width=1.8pt, dash dot] table{Data/lewisy41.dat}; \addlegendentry{Agent 4: $\xi_{4}^1$}
          \addplot[purple,dashed, line width=1.4pt] coordinates {(0,0.95) (0.4,0.95)};
	\end{axis}  
\end{tikzpicture} 
    \label{fig:statesplot1lewis}
\end{subfigure}
% \hspace{0.01\textwidth}
~
\begin{subfigure}[t]{0.22\textwidth}
    \centering
    \begin{tikzpicture}[scale=.50,>=latex']
	\tikzset{every pin/.append style={font=\large}} 
	\begin{axis}[xmin=0,xmax=0.4,ymin=-1.3, ymax=0.4, xlabel = {$t$ (time units)},ylabel={$\xi_{i}^2[V]$},legend pos = south east,
	grid=both, legend style={nodes={scale=1.35 }}, 
	grid style={line width=.1pt, draw=gray!25},] 
	\addplot[color=red!85, line width=1.8pt, dash dot] table{Data/lewisy12.dat}; \addlegendentry{Agent 1: $\xi_{1}^2$};
	\addplot[color=green!85!black, line width=1.8pt, dash dot] table{Data/lewisy22.dat}; \addlegendentry{Agent 2: $\xi_{2}^2$};
	\addplot[color=blue!85, line width=1.8pt, dash dot] table{Data/lewisy32.dat}; \addlegendentry{Agent 3: $\xi_{3}^2$};
	\addplot[color=black!85, line width=1.8pt, dash dot] table{Data/lewisy42.dat}; \addlegendentry{Agent 4: $\xi_{4}^2$}
    \addplot[purple,dashed, line width=1.4pt] coordinates {(0,0) (0.4,0)};
\end{axis}
\end{tikzpicture} 
    \label{fig:statesplot2lewis}
\end{subfigure}
%\vspace{-6pt}
\caption{\tb{State trajectories of the terminal voltages $(\xi_{i}^1,\xi_{i}^2)$ for Example~\ref{ex:microgrid} under feedback GCSC (top row, solid lines) and distributed cooperative control \cite{Lewis:13} (bottom row, dash-dotted lines). The left panels show 
 convergence of $\xi_i^1$ to the reference 
value $0.95$, and the right panels show convergence of $\xi_i^2$ to zero.}} 
\label{fig:traj} 
\end{figure}
Fig. \ref{fig:traj} shows that the obtained feedback GCSC strategy improves transient response. 
\end{example}
\begin{example} 
\label{ex:5agentoutput}
We consider a set of heterogeneous agents denoted by $\mathsf{N} = \{1,2,3,4,5\}$. The agents communicate over a directed network shown in Fig.~\ref{fig:5agentOutput}.
 \begin{figure}[h] 
	\centering
	\begin{tikzpicture}[scale=.60,>=latex', inner sep=1mm, font=\small]
	\tikzstyle{single node}=[circle,auto=center,draw,minimum size=6pt,inner sep=2,fill=mygreen!20 ]
	\tikzstyle{double node}=[circle,auto=center,draw,minimum size=6pt,inner sep=2,fill=blue!20 ]
	\tikzstyle{diredge} = [draw, black!65, ->, line width=0.25mm]
	\tikzstyle{edge} =    [draw, black!65, -, line width=0.25mm]
	\def\x{2}
	%%% NODES
	\node (n1)[single node] at (0,0) {\scriptsize{$1$}};
	\node (n2)[double node] at (-.5*\x,-\x) {\scriptsize{$2$}};
	\node (n3)[single node] at (.5*\x,-\x) {\scriptsize{$3$}};
	\node (n4)[single node] at (2*\x,-\x) {\scriptsize{$4$}};
	\node (n5)[double node] at (2*\x,0) {\scriptsize{$5$}};
	
	%%% EDGES
	
	\path[diredge] (n1) --  (n2);
	\path[diredge] (n2) --  (n3);
	\path[diredge] (n3) --  (n1);
	\path[diredge] (n1) --  (n5);
	\path[diredge] (n1) --  (n4);
	\path[diredge] (n3) --  (n4);
	\path[diredge] (n4) --  (n5);
	
	%%%
\draw[fill=blue!20](6,-1) circle[radius=0.25];
\draw[fill=mygreen!20](6,-1.7) circle[radius=0.25];
% \draw(6,-1)node[right,xshift=5]{Agent state $x_i \in \mathbb{R}^2$};
% \draw(6,-1.7)node[right,xshift=5]{Agent state $x_i \in \mathbb{R}$};
\draw(6,-1)node[right,xshift=5]{Second-order agent};
\draw(6,-1.7)node[right,xshift=5]{First-order agent};
\end{tikzpicture}
    %\vspace{-1pt}
	\caption{\tb{Directed communication graph of a 5-agent heterogeneous MAS considered in Example~\ref{ex:5agentoutput}.}}
	\label{fig:5agentOutput}
\end{figure}
The LTI dynamics of each agent $i \in \mathsf{N}$ is given by: 
\begin{align}
    \dot{x}_i(t)= A_{ii} x_i(t)+ B_{ii} u_i(t) + \sum_{j \in \mathsf{N}_i} B_{ij} u_j(t), 
\label{eq:MAS_dynamics}
\end{align}
where $x_i \in \mathbb{R}, ~i=1,3,4$, $x_i \in \mathbb{R}^{2},~i=2,5$, and $u_i \in \mathbb R,~i = 1,3,4,5$, $u_i = \mathbb{R}^2,~i = 2$. Next, we denote $x_i := p_i \in \mathbb{R}$ for $i=1,3,4$, and $x_i := [p_i \; v_i]^{\prime} \in \mathbb{R}^2$ for $i=2,5$. 
We consider the system parameters from \cite{Cappello:21} and given by $A_{11}=0$, $A_{22}=\left[\begin{smallmatrix} 1 & 1 \\ 1 & 1 \end{smallmatrix}\right]$, $A_{33}=1$, $A_{44}=2$, and $A_{55}=\left[\begin{smallmatrix} 0 & 1 \\ 0 & 0 \end{smallmatrix}\right]$. The input matrices $B_{ii}$ and $B_{ij}, i \neq j$ matrices are $ B_{11}=1$, $ B_{21} = 
    \left[ \begin{smallmatrix} 0.3 \\ 0.2 
        \end{smallmatrix} \right]$, $B_{41}=0.1$, $B_{51}= 
    \left[ \begin{smallmatrix}
      0.2 \\  0
     \end{smallmatrix} \right]$, $B_{22} = 
     \left[\begin{smallmatrix}
      1 &0 \\
      0 &-1
     \end{smallmatrix} \right]$, $B_{32}= 
     \left[\begin{smallmatrix}
      0 &0.2
     \end{smallmatrix} \right]$, $B_{33}=1$, $ B_{13}=0.2$, $B_{43}=0.3$, $B_{44}=1$, $B_{54}=
     \left[\begin{smallmatrix}
      0.1 \\
      0
     \end{smallmatrix} \right]$, $B_{55} = 
    \left[ \begin{smallmatrix}
      0 \\
      2
     \end{smallmatrix} \right]$. Using the individual agent dynamics \eqref{eq:MAS_dynamics}, the overall system dynamics can be expressed in the form of \eqref{eq:state_dynamics}. From Fig.~\ref{fig:5agentOutput}, the in-neighbor set of each agent set is $\mathsf{N}_1 = \{3\}, \mathsf{N}_2 = \{1\}, \mathsf{N}_3 = \{2\}, \mathsf{N}_4 = \{1,3\}$, and $\mathsf{N}_5 = \{1,4\}$. The objective of the agent $i \in\{2,5\}$ is given by
     \begin{align}
        J_i = \int_{0}^{\infty} \Big(\sum_{j \in \mathsf N_i } (p_i -p_j)^2 + \sum_{ j \in \mathsf N_i \cap \{2,5\}}(v_i-v_j)^2 +  u_i^\prime R_i u_i \Big)dt, 
     \end{align}
    and the objective of agent $i\in \{1,3,4\}$ is given by
     \begin{align}
         J_i=\int_0^\infty \Big(\sum_{j \in \mathsf N_i } (p_i -p_j)^2+u_i^\prime R_i u_i \Big)dt. 
     \end{align}
We obtain the local information vectors $y_1=[x_1^\prime ~ x_3^\prime]^\prime$, $y_2= [x_1^\prime ~ x_2^\prime]^\prime$, 
$y_3= [x_2^\prime ~ x_3^\prime]^\prime$, 
$y_4= [x_1^\prime ~ x_3^\prime ~ x_4^\prime]^\prime$, $y_5= [x_1^\prime ~ x_4^\prime ~ x_5^\prime]^\prime$, $C_i$, and $Q_i$ from the network structure. 
We consider $R_i=1,~i=1,3,4,5,~R_i=I_2,~i=2$. 
 For simulation purpose, we choose, $x_0= (-0.3,-0.5,-0.4,-0.2,-0.1,-0.3,-0.4)$, and $\alpha_1 = \cdots = \alpha_5 = 0.2$, and $\delta = 0.25$. We obtain the following feedback GCSC strategy using the procedure described in Remark \ref{rem:computation_GCSC}. 
\begin{align*}
F_{\mathrm{s}}= 
\left[\begin{smallmatrix}
-1.3392 & 0        & 0        & 0.5544  & 0        & 0        & 0 \\
 0.9748 & -3.9132  & -2.4520  & 0       & 0        & 0        & 0 \\
-0.4568 & 2.4189   & 2.8231   & 0       & 0        & 0        & 0 \\ 
 0      & 0.2554   & -0.3968  & -3.9270 & 0        & 0        & 0 \\ 
 0.8975 & 0        & 0        & 0.1809  & -5.5484  & 0        & 0 \\ 
 0.7665 & 0        & 0        & 0       & -0.3011  & -1.4171  & -1.8687
 \end{smallmatrix} \right]
\end{align*}
For this problem, the existence of a Pareto optimal solution is unclear. We obtain $J_{\mathbf{\alpha}}(F_{\mathrm{s}}) = 0.2154 < \delta$, which verifies Theorem~\ref{thm:GC_verification}. Furthermore, we compute $J_{\mathrm{OPT}} = 1.0269$, and using the obtained $F_{\mathrm{s}}$, we obtain $J_{\mathrm{GC}}(F_{\mathrm{s}}) = 1.0768$. \tb{Hence, $\eta_2(F_{\mathrm{s}}) = 1.0486 
< \frac{N\delta}{J_{\mathrm{OPT}}} = 1.2173$, 
verifying Corollary~\ref{cor:eta2_upperbound}, 
and corresponds to a performance loss of 
$(\eta_2(F_{\mathrm{s}}) - 1) \times 100\% = 
4.86\%$ relative to $J_{\mathrm{OPT}}$.} Fig. \ref{fig:trajhetero} shows that the agents' trajectories converge asymptotically to zero.
\begin{figure}[t]   \centering
\hspace*{-0.8cm}
	\begin{subfigure}[t]{.20\textwidth} 
		\begin{tikzpicture}[scale=.50,>=latex']
	\tikzset{every pin/.append style={font=\large}}
	\begin{axis}[xmin=0,xmax=8,ymin=-0.2, ymax=0.5, xlabel = {$t$ (time units)},ylabel={$p_{i}$},legend pos = north east,
		grid=both, legend style={nodes={scale=1.35 }}, 
		grid style={line width=.1pt, draw=gray!25},] 
		\addplot[color=brown!85, line width=1.8pt] table{Data/position1.dat}; \addlegendentry{Agent $1$: $p_{1}$};
		\addplot[color=blue!85, line width=1.8pt] table{Data/position2.dat}; \addlegendentry{Agent $2$: $p_{2}$};
		\addplot[color=green!85!black, line width=1.8pt] table{Data/position3.dat}; \addlegendentry{Agent $3$: $p_{3}$};
		\addplot[color=black!85, line width=1.8pt] table{Data/position4.dat}; \addlegendentry{Agent $4$: $p_{4}$}
		\addplot[color=red!85, line width=1.8pt] table{Data/position5.dat}; \addlegendentry{Agent $5$: $p_{5}$}
	\end{axis}  
\end{tikzpicture}  
		\label{fig:position1plotGCC}
	\end{subfigure}	   \qquad  
	\begin{subfigure}[t]{.20\textwidth} 
		\begin{tikzpicture}[scale=.50,>=latex']
	\tikzset{every pin/.append style={font=\large}}
	\begin{axis}[xmin=0,xmax=8,ymin=-0.4, ymax=0.5, xlabel = {$t$ (time units)},ylabel={$v_{i}$},legend pos = north east,
		grid=both, legend style={nodes={scale=1.35}}, 
		grid style={line width=.1pt, draw=gray!25},] 
		\addplot[color=blue!85, line width=1.8pt] table{Data/velocity2.dat}; \addlegendentry{Agent $2$: $v_{2}$};
		\addplot[color=red!85, line width=1.8pt] table{Data/velocity5.dat}; \addlegendentry{Agent $5$: $v_{5}$};
	\end{axis}  
\end{tikzpicture}  
		\label{fig:velocity2plotGCC}
	\end{subfigure} 
    %\vspace{-6pt}
	\caption{\tb{State trajectories of the agents for Example~\ref{ex:5agentoutput} under the proposed GCSC strategy. 
The left panel shows the position trajectories, while the right panel shows the velocity trajectories. 
Both asymptotically converge to zero.}} 
	\label{fig:trajhetero} 
\end{figure}
\end{example}

\section{Conclusion}
\label{sec:concl}
In this work, we introduced the notion of 
feedback GCSC for infinite-horizon LQ-CDGs under output feedback, which provides a tractable approach when computing Pareto optimal controls under output feedback is difficult. At a feedback GCSC, the weighted team cost is upper-bounded by a prescribed threshold while satisfying the structural constraint. We established monotonicity properties of the GCSC set and the admissible weight set. We showed that Pareto optimal controls, whenever they exist, belong to the class of feedback GCSCs. We also provided 
performance measures for the Pareto optimal 
controls and the proposed GCSC relative to the 
output feedback optimal control. Verification 
and synthesis conditions were derived using LMIs, with the synthesis formulation being convex and requiring no SDP relaxation. Future directions include extending the GCSC concept to scalar CDGs, dynamic output feedback, and stochastic CDGs.
\vspace{0.1em}

\section*{Acknowledgment}
The authors thank Dr. P. V. Reddy, IIT Madras, for his valuable feedback on the manuscript.
% \noindent\textbf{Acknowledgment:} The authors thank Dr. P. V. Reddy, IIT Madras, for his valuable feedback on the manuscript.
% \vspace{-1.1em}
\bibliographystyle{IEEEtran}
\bibliography{Pareto}

%%%%%%%%%%%%%%%%%%%%%%%%%%%%%%%%
% \section*{Notes}
% \begin{enumerate}[label=(N\arabic*)] % Customize list appearance
%   \forlistloop{\item}{\noteslist}%
% \end{enumerate}

\end{document}